\documentclass{amsart}

\makeatletter \thm@headfont{\bfseries\scshape} \makeatother
\usepackage[english]{babel}
\usepackage[utf8]{inputenc}
\usepackage{latexsym,euscript}
\usepackage{amsbsy,amscd,amsfonts,amsmath,amsopn,amssymb,amstext,amsthm,amsxtra}
\usepackage{epsf,epsfig,graphics,color,verbatim}
\usepackage{cite,float,multirow}
\usepackage{graphicx}
\usepackage{subfigure}
\usepackage{enumerate}
\usepackage{mathtools}
\usepackage{wasysym}
\usepackage{tikz}
\usepackage{hyperref}

\theoremstyle{plain}
\newtheorem{theorem}{Theorem}[section]
\newtheorem{lemma}[theorem]{Lemma}

\newtheorem{proposition}[theorem]{Proposition}

\theoremstyle{definition} 
\newtheorem{definition}[theorem]{Definition}
\newtheorem{remark}[theorem]{Remark}

\newcommand{\dd}{\textup{d}}

\newcommand{\NN}{\mathbb{N}}

\newcommand{\xbf}{\mathbf{x}}

\newcommand{\set}[1]{\left\{ #1 \right\}}
\newcommand{\setb}[1]{\left( #1 \right)}
\newcommand{\abs}[1]{\left| #1 \right|}

\newcommand{\ind}[1]{\mathbf{1}_{ #1 }}
\newcommand{\symb}{\mathrm{symb}}

\newcommand{\by}[1]{\textsc{#1}:}

\allowdisplaybreaks

\begin{document}
	
	\title[Average diameter of blocks of uniformly distributed sequences]{On the asymptotic average 
	diameter of blocks of uniformly distributed sequences and related results}
	
	\author[1]{Sebastian Heintze}
	\address{\textbf{Sebastian Heintze}\\
		Department of Artificial Intelligence and Human Interfaces\\
		Faculty of Digital and Analytical Sciences\\
		University of Salzburg\\
		Hellbrunnerstraße 34\\
		5020 Salzburg\\
		Austria}
	\email{sebastian.heintze@plus.ac.at}
	
	\author[2]{Wolfgang Trutschnig}
	\address{\textbf{Wolfgang Trutschnig}\\
		Department of Artificial Intelligence and Human Interfaces\\
		Faculty of Digital and Analytical Sciences\\
		University of Salzburg\\
		Hellbrunnerstraße 34\\
		5020 Salzburg\\
		Austria}
	\email{wolfgang@trutschnig.net}
	
	\def\shortauthors{S. Heintze --- W. Trutschnig}
	
	
	\keywords{Uniformly distributed sequences, copula, $ d $-stochastic measure}
	\subjclass{11K06, 11J71}
	\thanks{The authors gratefully acknowledge the support of the project `IDA Lab Salzburg' (20102/F2300464-KZP; 20204-WISS/225/348/3-2023).}
	
	
	\begin{abstract}
		This paper was triggered by recent results on the maximal `average distance between consecutive points' of uniformly distributed sequences (u.f.d.s.).
		Here we address a generalized version of this question, consider pairwise maximal/minimal/total distances in blocks/segments of $ d \geq 2 $ consecutive points of u.f.d.s., and derive sharp upper bounds for all three aggregations.
		Our main idea of proof consists in, firstly, adding degrees of freedom, secondly, translating the resulting problem to a solvable optimization problem over the compact family of $ d $-stochastic measures, and, thirdly, showing that the obtained bounds are also sharp bounds for the original problem.
	\end{abstract}
	
	
	\maketitle
	
	
	\par
	
	\section{Introduction}
	
	A sequence $ (x_n)_{n \in \NN} $ in the unit interval $ [0,1] $ is called uniformly distributed if for every interval $ [a,b] \subseteq [0,1 ]$ we have that ($ \ind{[a,b]} $ denoting the indicator function of $ [a,b] $)
	\begin{equation*}
		\lim\limits_{n \to \infty} \frac{1}{n} \sum_{i=1}^{n} \ind{[a,b]}(x_i) = b-a = \lambda([a,b])
	\end{equation*}
	holds.
	Translating to the stochastic perspective, $ (x_n)_{n \in \NN} $ is uniformly distributed if and only if the empirical measure $ \vartheta_n $, given by ($ \delta_a $ denoting the Dirac measure at $ a $)
	\begin{equation*}
		\vartheta_n = \frac{1}{n} \sum_{i=1}^{n} \delta_{x_i} \in \mathcal{P}([0,1]),
	\end{equation*}
	converges weakly to the Lebesgue measure $ \lambda $ for $ n \to \infty $.
	In what follows, we will let $ \mathcal{U} $ denote the family of all uniformly distributed sequences (u.f.d.s., for short).
	For extensive background on u.f.d.s.\ we refer to the textbooks \cite{Ku-Ni,Dr-Ti} and the references therein.
	
	Our contribution is motivated by the following question tackled by Pillichshammer and Steinerberger in \cite{Pi-St}, where the authors studied, how large the asymptotic `average distance between consecutive points' of $ (x_n)_{n \in \NN} \in \mathcal{U} $ can possibly be.
	Among other things, they proved that
	\begin{equation}
		\label{eq:pill}
		\sup_{(x_n)_{n \in \NN} \in \mathcal{U}} \limsup\limits_{n \to \infty} \frac{1}{n} \sum_{i=1}^n \abs{x_{i+1}-x_i} \leq \frac{1}{2}
	\end{equation}  
	holds and that the upper bound $ \frac{1}{2} $ is attainable, i.e., that there exist u.f.d.s.\ such that equation \eqref{eq:pill} becomes an identity (one example being the van der Corput sequence in base $ 2 $).
	Here we address a generalized version of this problem, consider pairwise maximal/minimal/total distances in blocks/segments of $ d \geq 2 $ consecutive points, and work with the three (continuous) functions $ f_{\min}, f_{\max}, f_{\sum} \colon [0,1]^d \to [0,1] $, defined by
	\begin{align}
		\label{eq:f-min}
		f_{\min}(\xbf) &\coloneqq \min_{i,j \in [d],\ i \neq j} \abs{x_i-x_j}, \\
		\label{eq:f-max}
		f_{\max}(\xbf) &\coloneqq \max_{i,j \in [d],\ i \neq j} \abs{x_i-x_j}, \\
		\label{eq:f-sum}
		f_{\sum}(\xbf) &\coloneqq \sum_{i,j \in [d]} \abs{x_i-x_j},
	\end{align}
	whereby $ \xbf = (x_1,\ldots,x_d) $ and $ [d] \coloneqq \set{1,\ldots,d} $.
	Notice that for $ d=2 $, up to the factor $ 2 $ all three functions boil down to the pairwise distance of consecutive points as considered in inequality \eqref{eq:pill}. 
	Our main objective is to determine
	\begin{equation*}
		B_f^d \coloneqq \sup_{(x_n)_{n \in \NN} \in \mathcal{U}} \limsup\limits_{n \to \infty} \frac{1}{n} \sum_{i=1}^{n} f(x_i,x_{i+1},\ldots,x_{i+d-1})
	\end{equation*}
	for each of the three functions $ f_{\min}, f_{\max}, f_{\sum} $.
	In fact, we will prove that
	\begin{equation*}
		B_{f_{\min}}^d = \frac{1}{d}, \quad
		B_{f_{\max}}^d = \frac{d-1}{d}, \quad
		B_{f_{\sum}}^d = \frac{d^2-1}{3}
	\end{equation*}
	for every $ d \geq 2 $.
	Doing so, our proof strategy can be subsumed as follows:
	\begin{enumerate}[i)]
		\item
		Considering that for every u.f.d.s.\ $ (x_n)_{n \in \NN} $ and every $ j \in \NN_0 $ the shifted sequence and $ (x_{j+n})_{n \in \NN} $ is a u.f.d.s.\ too, we obviously have that the upper bound $ B_f^d $ fulfills
		\begin{equation*}
			B_f^d \leq \sup_{\xbf^1,\xbf^2,\ldots,\xbf^d \in \mathcal{U}} \limsup\limits_{n \to \infty} \frac{1}{n} \sum_{i=1}^{n} f(x_i^1,x_i^2,\ldots,x_i^d) \eqqcolon C_f^d.
		\end{equation*}
		\item
		We show that $ C_f^d $ can be expressed as
		\begin{equation}
			\label{eq:via-ds}
			C_f^d = \sup_{\vartheta \in \mathcal{P}_\textrm{ds}([0,1]^d)} \int_{[0,1]^d} f \, \dd\vartheta,
		\end{equation}
		i.e., the maximization problem can be translated to a problem of maximizing integrals over the family $ \mathcal{P}_\textrm{ds}([0,1]^d) $ of all $ d $-stochastic measures (see Section~\ref{sec:nota-prelim}).
		\item
		For determining the right-hand side of \eqref{eq:via-ds} it suffices to work with (handy) subfamilies of $ \mathcal{P}_\textrm{ds}([0,1]^d) $ which are dense (with respect to the topology induced by weak convergence).
		\item
		We show (the surprising fact) that $ B_f^d = C_f^d $ holds.
	\end{enumerate}
	The remainder of this paper is organized as follows:
	Section~\ref{sec:nota-prelim} gathers notation and relevant background on $ d $-stochastic measures and copulas that will be used in the sequel.
	Section~\ref{sec:trans-cop} justifies the afore-mentioned translation to the $ d $-stochastic setting, proves equation \eqref{eq:via-ds} as main result, and then establishes denseness for the family of all so-called shuffles of $ M $.
	Main objective of Section~\ref{sec:bounds-d-stoch} is to determine $ C_f^d $ for all three functions  $ f_{\min}, f_{\max}, f_{\sum} $.
	Finally, Section~\ref{sec:back-orig-prob} proves $ B_f^d = C_f^d $ by constructing u.f.d.s.\ either attaining the upper bound $ B_f^d $ or approximating $ B_f^d $ arbitrarily well. 
	
	\section{Notation and preliminaries}
	\label{sec:nota-prelim}
	
	For every metric space $ (S,\rho) $ the Borel $ \sigma $-field on $ S $ will be denoted by
	$ \mathcal{B}(S) $.
	Throughout the whole paper $ d \geq 2 $ will denote the dimension, bold symbols will denote vectors or sequences and we will write either $ \xbf = (x_1,\ldots,x_d) $ or $ \xbf = (x_1,x_2,\ldots) $, respectively; from the context it will be clear, whether $ \xbf $ is a vector or a sequence.
	$ [N] $ will denote the set $ \set{1,\ldots,N} $ for every $ N \in \NN $, and $ \pi_i: [0,1]^d \to [0,1] $ the projection on the $ i $-th coordinate, i.e., $ \pi_i(x_1,\ldots,x_d) = x_i $.
	
	$ \mathcal{P}([0,1]^d) $ will denote the family of all probability measures on $ \mathcal{B}([0,1]^d) $, $ \vartheta^{\pi_i} \in \mathcal{P}([0,1]) $ the $ i $-th marginal of $ \vartheta $, i.e., the push-forward of $ \vartheta $ via $ \pi_i $.
	A measure $ \vartheta \in \mathcal{P}([0,1]^d) $ is called $ d $-stochastic, if all univariate marginals of $ \vartheta $ coincide with the Lebesgue measure $ \lambda $ on $ [0,1] $; $ \mathcal{P}_\textrm{ds}([0,1]^d) $ will denote the family of all $ d $-stochastic measures on $ [0,1]^d $.
	Moreover, $ \mathcal{C}_d $ denotes the family of all $ d $-dimensional copulas, i.e., the distribution functions of $ d $-stochastic measures restricted to $ [0,1]^d $.
	Considering
	\begin{equation*}
		C(x_1,\ldots,x_d) = \vartheta\left( \bigtimes_{i=1}^d [0,x_i] \right), \qquad (x_1,\ldots,x_d) \in [0,1]^d
	\end{equation*}
	establishes a one-to-one correspondence between $ \mathcal{C}_d $ and $ \mathcal{P}_\textrm{ds}([0,1]^d) $.
	It is well known (see, e.g., \cite{Bi,Ku-To-Me-Vr}) that endowing $ \mathcal{P}([0,1]^d) $ with the Hutchinson~/ Wasserstein~/ Kantorovic-Rubinstein metric $ \rho_W $, defined by
	\begin{equation*}
		\rho_W(\mu,\nu) = \sup \set{\abs{\int_{[0,1]^d} f \,\dd\mu - \int_{[0,1]^d} f \, \dd\nu} : \ \ f \colon [0,1]^d \to \mathbb{R} \text{ is } 1 \text{-Lipschitz}},
	\end{equation*}
	yields a compact metric space $ (\mathcal{P}([0,1]^d),\rho_W) $, which has $ \mathcal{P}_\textrm{ds}([0,1]^d) $ as closed (hence compact) convex subset.
	Moreover (again see \cite{Bi,Ku-To-Me-Vr}), $ \rho_W $ is a metrization of weak convergence in $ \mathcal{P}([0,1]^d) $, i.e., a sequence $ (\vartheta_n)_{n \in \NN} $ in $ \mathcal{P}([0,1]^d) $ converges weakly to $ \vartheta $ if and only if for every continuous function $ f \colon [0,1]^d \to \mathbb{R} $ we have that
	\begin{equation*}
		\lim\limits_{n \to \infty} \int_{[0,1]^d} f(\xbf) \, \dd\vartheta_n(\xbf) = \int_{[0,1]^d} f(\xbf) \, \dd\vartheta(\xbf).
	\end{equation*}
	In the sequel we will also use the fact that weak convergence in $ \mathcal{P}_\textrm{ds}([0,1]^d) $ is equivalent to uniform convergence of the corresponding copulas.
	In other words, letting $ C, C_1, C_2, \ldots $ denote the $ d $-dimensional copulas corresponding to the $ d $-stochastic measures $ \vartheta, \vartheta_1, \vartheta_2, \ldots $, the following two assertions are equivalent (see \cite{Du-Se}):
	\begin{enumerate}[i)]
		\item
		The sequence $ (\vartheta_n)_{n \in \NN} $ converges weakly to $ \vartheta $.
		\item
		$ \lim\limits_{n \to \infty} d_{\infty}(C_n,C) = \lim\limits_{n \to \infty} \max \set{\abs{C_n(\xbf) - C(\xbf)} : \xbf \in [0,1]^d} = 0 $.
	\end{enumerate}	 
	
	A \emph{Markov kernel} from $ \mathbb{R} $ to $ \mathcal{B}(\mathbb{R}^{d-1}) $ is a mapping $ K \colon \mathbb{R} \times \mathcal{B}(\mathbb{R}^{d-1}) \to [0,1] $ such that $ x \mapsto K(x,B) $ is measurable for every fixed $ B \in \mathcal{B}(\mathbb{R}^{d-1}) $ and $ B \mapsto K(x,B) $ is a probability measure for every fixed $ x \in \mathbb{R} $.
	Suppose that $ \mathbf{Y} \colon \Omega \to \mathbb{R}^{d-1} $ and $ X \colon \Omega \to \mathbb{R} $ are random vectors/variables on a joint probability space $ (\Omega, \mathcal{A}, \mathbb{P}) $.
	Then a Markov kernel $ K \colon \mathbb{R} \times \mathcal{B}(\mathbb{R}^{d-1}) \to [0,1] $ is called regular conditional distribution of $ \mathbf{Y} $ given $ X $ if for every $ B \in \mathcal{B}(\mathbb{R}^{d-1}) $
	\begin{equation*}
		K(X(\omega),B) = \mathbb{E}(\ind{B} \circ \mathbf{Y} \mid X)(\omega)
	\end{equation*}
	holds $ \mathbb{P} $-a.s..
	It is well known that for each random vector $ (X,\mathbf{Y}) $ a regular conditional distribution $ K(\cdot,\cdot) $ of $ \mathbf{Y} $ given $ X $ exists, that $ K(\cdot,\cdot) $ is unique $ \mathbb{P}^X $-a.s.\ (i.e., unique for $ \mathbb{P}^X $-almost all $ x \in \mathbb{R} $) and that $ K(\cdot,\cdot) $ only depends on the distribution $ \mathbb{P}^{(X,\mathbf{Y})} $ of the vector $ (X,\mathbf{Y}) $.
	Hence, if $ (X,\mathbf{Y}) $ has distribution $ \vartheta \in \mathcal{P}_\textrm{ds}([0,1]^d) $, then we will denote (a version of) the regular conditional distribution of $ \mathbf{Y} $ given $ X $ by $ K_{\vartheta}(\cdot,\cdot) $, view it directly as function from $ [0,1] \times \mathcal{B}([0,1]^{d-1}) $ to $ [0,1] $, and refer to $ K_{\vartheta}(\cdot,\cdot) $ simply as regular conditional distribution or Markov kernel of $ \vartheta $.
	Note that for every $ \vartheta \in \mathcal{P}_\textrm{ds}([0,1]^d) $, its conditional regular distribution $ K_{\vartheta}(\cdot,\cdot) $, and a Borel set $ F \in \mathcal{B}([0,1]^{d}) $ we have (with $ F_x = \set{\mathbf{y} \in [0,1]^{d-1} : (x,\mathbf{y}) \in F} $ denoting the $ x $-cut of $ F $ for every $ x \in [0,1] $)
	\begin{equation*}
		\int_{[0,1]} K_{\vartheta}(x,F_x) \, \dd\lambda(x) = \vartheta(F).
	\end{equation*}
	In particular, 
	\begin{equation*}
		\label{eq:markov-kernel}
		\int_{[0,1]} K_{\vartheta}(x,E) \, \dd\lambda(x) = \lambda(E_{i_0})
	\end{equation*}
	holds for $ E = \bigtimes_{i=1}^{d-1} E_i $ with $ E_i = [0,1] $ for all $ i \neq i_0 $.
	On the other hand, every Markov kernel $ K \colon [0,1] \times \mathcal{B}([0,1]^{d-1}) \to [0,1] $ fulfilling equation \eqref{eq:markov-kernel} is the regular conditional distribution of a $ d $-stochastic measure $ \vartheta \in \mathcal{P}_\textrm{ds}([0,1]^d) $.
	More generally, for every continuous or $ \vartheta $-integrable function $ f \colon [0,1]^d \to \mathbb{R} $ the following identity (to which we will loosely refer as `disintegration' in the sequel)
	holds:
	\begin{equation*}
		\int_{[0,1]^d} f(\xbf) \, \dd\vartheta(\xbf) = \int_{[0,1]} \int_{[0,1]^{d-1}} f(x_1,x_2,\ldots,x_d) K_{\vartheta}(x_1, \dd(x_2,\ldots,x_d)) \, \dd\lambda(x_1) 
	\end{equation*}
	For more details and properties of conditional expectation, regular conditional distributions 
	and disintegration we refer to \cite{Ka,Kl} as well as to \cite{Ba, Bh-Wa}.

	\section{Translating the maximization to the $d$-stochastic setting}
	\label{sec:trans-cop}
	
	The bivariate version of the following general result was already used in \cite{Pi-St}.
	For the sake of completeness and clarity we state the general $ d $-dimensional version and prove it using properties of weak convergence of measures and compactness ($ x_i^j $ denotes the $ i $-th element of the sequence $ \xbf^j $).
	
	\begin{lemma}
		Suppose that $ \xbf^1, \xbf^2, \ldots, \xbf^d $ are u.f.d.s.\ and that $ f \colon [0,1]^d \to \mathbb{R} $ is continuous. 
		Then for every accumulation point $ a $ of the sequence $ (a_n)_{n \in \NN} $, given by
		\begin{equation*}
			a_n \coloneqq \frac{1}{n} \sum_{i=1}^{n} f(x_i^1,x_i^2,\ldots,x_i^d),
		\end{equation*}
		there exists some $ d $-stochastic measure $ \vartheta \in \mathcal{P}_\textrm{ds}([0,1]^d) $ with
		\begin{equation*}
			a = \int_{[0,1]^d} f \, \dd\vartheta.
		\end{equation*}
	\end{lemma}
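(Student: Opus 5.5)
The plan is to encode the averages $a_n$ as integrals against empirical measures on $[0,1]^d$ and then use compactness of $(\mathcal{P}([0,1]^d),\rho_W)$. For every $n \in \NN$ define the empirical measure
\begin{equation*}
	\mu_n \coloneqq \frac{1}{n} \sum_{i=1}^{n} \delta_{(x_i^1,x_i^2,\ldots,x_i^d)} \in \mathcal{P}([0,1]^d).
\end{equation*}
By construction, $a_n = \int_{[0,1]^d} f \, \dd\mu_n$ for every $n$. Now let $a$ be an accumulation point of $(a_n)_{n \in \NN}$ and choose a subsequence $(n_k)_{k \in \NN}$ with $a_{n_k} \to a$.

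The first step uses compactness of $(\mathcal{P}([0,1]^d),\rho_W)$, recalled in Section~\ref{sec:nota-prelim}. It lets us pass to a further subsequence, still denoted $(n_k)$, along which $\mu_{n_k}$ converges weakly to some $\vartheta \in \mathcal{P}([0,1]^d)$. Since $f$ is continuous on $[0,1]^d$, weak convergence gives
\begin{equation*}
	a = \lim_{k \to \infty} a_{n_k} = \lim_{k \to \infty} \int f \, \dd\mu_{n_k} = \int f \, \dd\vartheta.
\end{equation*}

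It remains to show that $\vartheta$ is $d$-stochastic. The projection $\pi_j$ is continuous, so the continuous mapping theorem (or simply composing test functions $g \circ \pi_j$ with $g \in C([0,1])$) gives $\mu_{n_k}^{\pi_j} \to \vartheta^{\pi_j}$ weakly. On the other hand, $\mu_n^{\pi_j} = \frac{1}{n}\sum_{i=1}^n \delta_{x_i^j}$ is exactly the empirical measure of the sequence $\xbf^j$. Because $\xbf^j$ is a u.f.d.s., this empirical measure converges weakly to $\lambda$, as noted in the introduction. Weak limits are unique, so $\vartheta^{\pi_j} = \lambda$ for all $j \in [d]$, and hence $\vartheta \in \mathcal{P}_\textrm{ds}([0,1]^d)$.

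I do not expect a real obstacle here. The only point needing care is to pick the weakly convergent subsequence inside the subsequence realizing $a$. One should also make sure the equivalence between uniform distribution and weak convergence of empirical measures to $\lambda$ holds: convergence on all intervals implies weak convergence, since $\lambda$ has no atoms and continuous functions can be approximated uniformly by step functions.
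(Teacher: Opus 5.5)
Your proposal is correct and follows essentially the same route as the paper. Both arguments use empirical measures, extract a weakly convergent sub-subsequence via compactness of $(\mathcal{P}([0,1]^d),\rho_W)$, and then identify the marginals. The only difference is in that last step: you push forward through $\pi_j$ (testing with $g\circ\pi_j$) and use uniqueness of weak limits. The paper instead applies Portmanteau's theorem to the rectangles $[0,x]\times[0,1]^{d-1}$ and $[0,x+\Delta)\times[0,1]^{d-1}$ to get $x \le \vartheta([0,x]\times[0,1]^{d-1}) \le x+\Delta$. Your version is slightly cleaner but leans on the equivalence between uniform distribution and weak convergence of the empirical measures to $\lambda$, which you correctly flag and justify.
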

	\begin{proof}
		Continuity of $ f $ and compactness of $ [0,1]^d $ imply that all accumulation points of the sequence $ (a_n)_{n \in \NN} $ are contained in a bounded interval. 
		Suppose that
		\begin{equation*}
			a = \lim\limits_{j \to \infty} \frac{1}{n_j} \sum_{i=1}^{n_j} f(x_i^1,x_i^2,\ldots,x_i^d)
		\end{equation*}
		for some strictly increasing sequence $ (n_j)_{j \in \NN} $. 
		Let $ \delta_{x_i^1,x_i^2,\ldots,x_i^d} $ denote the Dirac measure at $ (x_i^1, x_i^2, \ldots, x_i^d) $ and set
		\begin{equation*}
			\vartheta_{n_j} \coloneqq \frac{1}{n_j} \sum_{i=1}^{n_j} \delta_{x_i^1,x_i^2,\ldots,x_i^d} \in \mathcal{P}([0,1]^d),
			\qquad j \in \NN.
		\end{equation*}
		Then $ (\vartheta_{n_j})_{j \in \NN} $ is a sequence in $ \mathcal{P}([0,1]^d) $, which by compactness has a weakly convergent subsequence $ (\vartheta_{n_{j_l}})_{l \in \NN} $ with weak limit $ \vartheta \in \mathcal{P}([0,1]^d) $.
		This directly yields
		\begin{align*}
			a &= \lim\limits_{l \to \infty} \frac{1}{n_{j_l}} \sum_{i=1}^{n_{j_l}} f(x_i^1,x_i^2,\ldots,x_i^d) \\
			&= \lim\limits_{l \to \infty} \int_{[0,1]^d} f(z_1,\ldots,z_d) \, \dd\vartheta_{n_{j_l}}(z_1,\ldots,z_d) \\
			&= \int_{[0,1]^d} f(z_1,\ldots,z_d) \, \dd\vartheta(z_1,\ldots,z_d),
		\end{align*} 
		so it remains to show that $ \vartheta $ is $ d $-stochastic.
		Applying Portmanteau's theorem (see \cite{Bi}) for the rectangles $ E_1 = [0,x] \times [0,1]^{d-1} $ and $ E_2 = [0,x+\Delta) \times [0,1]^{d-1} $ with $ x \in [0,1) $ and $ \Delta \in (0,1] $ fulfilling $ x+\Delta \in [0,1] $ yields
		\begin{align*}
			\vartheta(E_1) &\geq \limsup\limits_{l \to \infty} \vartheta_{n_{j_l}}([0,x] \times [0,1]^{d-1}) \\
			&= \limsup\limits_{l \to \infty} \frac{1}{n_{j_l}} \sum_{i=1}^{n_{j_l}} \delta_{x_i^1}([0,x]) = \lambda([0,x]) = x
		\end{align*}
		as well as
		\begin{align*}
			\vartheta(E_2) &\leq \liminf\limits_{l \to \infty} \vartheta_{n_{j_l}}([0,x+\Delta) \times [0,1]^{d-1}) \\
			&= \liminf\limits_{l \to \infty} \frac{1}{n_{j_l}} \sum_{i=1}^{n_{j_l}} \delta_{x_i^1}([0,x+\Delta)) \\
			&= \lambda([0,x+\Delta)) = x+\Delta.
		\end{align*}
		This implies
		\begin{equation*}
			x \leq \vartheta\left( [0,x] \times [0,1]^{d-1} \right) \leq \vartheta\left([0,x +\Delta) \times [0,1]^{d-1} \right) \leq x +\Delta,
		\end{equation*}
		so, using the fact that probability measures are continuous from above and considering $ \Delta \to 0 $, we conclude that $ x = \vartheta\left( [0,x] \times [0,1]^{d-1} \right) $.
		As $ x \in [0,1) $ was arbitrary, it follows that the first marginal $ \vartheta^{\pi_1} $ of $ \vartheta $ coincides with $ \lambda $.
		Since the previous arguments also work for any other coordinate, it altogether follows that $ \vartheta $ is $ d $-stochastic. 
	\end{proof}
	
	\begin{lemma}
		For every $ d \geq 2 $ the following identity holds:
		\begin{equation*}
			C_f^d = \sup_{\xbf^1,\xbf^2,\ldots,\xbf^d \in \mathcal{U}} \limsup\limits_{n \to \infty} \frac{1}{n} \sum_{i=1}^{n} f(x_i^1,x_i^2,\ldots,x_i^d) = \sup_{\vartheta \in \mathcal{P}_\textrm{ds}([0,1]^d)} \int_{[0,1]^d} f \, \dd\vartheta.
		\end{equation*}
	\end{lemma}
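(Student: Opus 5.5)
The identity splits into two inequalities; $f$ is tacitly assumed continuous, as in the preceding lemma. I will prove "$\le$" with the preceding lemma and "$\ge$" by building, for each $d$-stochastic $\vartheta$, uniformly distributed sequences whose joint empirical measures converge weakly to $\vartheta$.

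For "$\le$", fix $\xbf^1,\ldots,\xbf^d \in \mathcal{U}$ and put $a_n = \frac1n\sum_{i=1}^n f(x_i^1,\ldots,x_i^d)$. Since $f$ is bounded, $\limsup_n a_n$ is a finite accumulation point of $(a_n)$. The preceding lemma then gives some $\vartheta \in \mathcal{P}_\textrm{ds}([0,1]^d)$ with $\limsup_n a_n = \int f\,\dd\vartheta$. Hence $\limsup_n a_n \le \sup_{\vartheta}\int f\,\dd\vartheta$, and taking the supremum over all choices of sequences gives the first inequality.

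For "$\ge$", fix $\vartheta \in \mathcal{P}_\textrm{ds}([0,1]^d)$. It suffices to find $\xbf^1,\ldots,\xbf^d \in \mathcal{U}$ such that the empirical measures $\vartheta_n = \frac1n\sum_{i\le n}\delta_{(x_i^1,\ldots,x_i^d)}$ converge weakly to $\vartheta$; continuity of $f$ then makes $a_n \to \int f\,\dd\vartheta$.

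The natural construction uses an i.i.d. sequence $(\mathbf{X}_i)_{i\in\NN}$ with law $\vartheta$. By the strong law of large numbers applied to a countable convergence-determining class (e.g. rectangles with rational corners), almost surely $\vartheta_n \to \vartheta$ weakly. Each coordinate sequence is then i.i.d. uniform on $[0,1]$, hence almost surely uniformly distributed, by Glivenko–Cantelli or by the same countable-class argument. Intersecting these countably many almost-sure events yields one realization with all required properties.

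The main obstacle is verifying this almost-sure weak convergence cleanly, i.e. reducing weak convergence to countably many test functions. I would use a countable dense subset of $C([0,1]^d)$ in the sup-norm and apply the SLLN to each element; for general continuous $f$, approximate uniformly by elements of that subset. Should a deterministic argument be preferred, the alternative is to discretize $\vartheta$ into a grid measure and interleave finite point blocks. That route is fussier, so the probabilistic construction is my first choice.
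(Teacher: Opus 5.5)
Your proposal is correct and follows the paper's route. You get ``$\le$'' from the preceding lemma applied to the (finite) $\limsup$, and ``$\ge$'' from an i.i.d.\ sample of $\vartheta$ whose empirical measures converge weakly to $\vartheta$ almost surely; the paper cites the multivariate Glivenko--Cantelli theorem where you use the SLLN on a countable dense subset of $C([0,1]^d)$. Your explicit check that each coordinate sequence lies in $\mathcal{U}$ covers a point the paper leaves implicit, though it also follows directly from the joint weak convergence by projecting onto coordinates, since every marginal of $\vartheta$ is $\lambda$.
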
	
	\begin{proof}
		As a direct consequence of the previous lemma we already have that
		\begin{equation*}
			\sup_{\xbf^1,\xbf^2,\ldots,\xbf^d \in \mathcal{U}} \limsup\limits_{n \to \infty} \frac{1}{n} \sum_{i=1}^{n} f(x_i^1,x_i^2,\ldots,x_i^d) \leq \sup_{\vartheta \in \mathcal{P}_\textrm{ds}([0,1]^d)} \int_{[0,1]^d} f \, \dd\vartheta.
		\end{equation*}
		In order to show the reverse inequality we proceed as follows:
		Suppose that the $ d $-dimensional random vector $ \mathbf{X} $ has distribution $ \vartheta $ and suppose that $ \mathbf{X}^1, \mathbf{X}^2, \ldots $ is an independent sample from $ \mathbf{X} $.
		Then according to the (multivariate) Glivenko Cantelli theorem (see, e.g., \cite{Va-We}) the sequence $ (\vartheta_{n})_{n \in \NN} $ of empirical measures
		\begin{equation*}
			\vartheta_{n} \coloneqq \frac{1}{n} \sum_{i=1}^{n} \delta_{X_i^1,X_i^2,\ldots,X_i^d} \in \mathcal{P}([0,1]^d)
		\end{equation*}
		converges weakly to $ \vartheta $ with probability one.
		Hence, considering
		\begin{equation*}
			\int_{[0,1]^d} f \, \dd\vartheta_n = \frac{1}{n} \sum_{i=1}^{n} f(X_i^1,X_i^2,\ldots,X_i^d)
		\end{equation*}
		directly yields the desired result.
	\end{proof}
	
	Using the fact that for continuous $ f $ the mapping $ \vartheta \mapsto \int_{[0,1]^d} f \, \dd\vartheta $ is continuous on $ \mathcal{P}_\textrm{ds}([0,1]^d) $ with respect to weak convergence, for determining the supremum in $ C_f^d $ it suffices to calculate the supremum over dense subfamilies of $ \mathcal{P}_\textrm{ds}([0,1]^d) $.
	One of the most handy dense subsets of $ \mathcal{P}_\textrm{ds}([0,1]^d) $ is that of equidistant multivariate shuffles of $ M $ (see \cite{Du-Se} and the references therein).
	We will first recall the concept of equidistant shuffles of $ M $ and then formally prove denseness 
	of this class.
	Doing so, we will write $ I_i^N \coloneqq [\frac{i-1}{N},\frac{i}{N}) $ for every $ N \in \NN $ and every $ i \in [N] $.
	
	\begin{definition}
		\label{def:shuffle}
		An element $ \vartheta \in \mathcal{P}_\textrm{ds}([0,1]^d) $ is called an equidistant shuffle of $ M $ with resolution $ N \in \NN $ if and only if there exist $ d-1 $ permutations $ \sigma_2, \ldots, \sigma_d $ of $ [N] $ such that for the $ d-1 $ transformations $ h_2, \ldots, h_d \colon [0,1] \to [0,1] $, defined by
		\begin{equation*}
			h_i(x) \coloneqq \sum_{j=1}^{N} \left( x - \frac{j-1}{N} + \frac{\sigma_i(j)-1}{N} \right) \, \ind{I_j^N}(x),
		\end{equation*}
		the following property holds:
		$ \vartheta $ is the distribution of the random vector 
		\begin{equation}
			\label{eq:shuffle}
			(X_1, h_2(X_1), h_3(X_1), \ldots, h_d(X_1))
		\end{equation}
		where $ X_1 $ is a uniformly $ [0,1] $-distributed random variable.  
	\end{definition}
	
	Notice that each of the transformations $ h_i $ preserves the Lebesgue measure $ \lambda $, i.e., the push-forward $ \lambda^{h_i} $ of $ \lambda $ via $ h_i $ coincides with $ \lambda $.
	It is therefore straightforward to verify that the distribution of the vector 
	\eqref{eq:shuffle} is a $ d $-stochastic measure. 
	Moreover, it might seem natural to also transform the variable $ X_1 $ in equation \eqref{eq:shuffle} via a $ \lambda $-preserving transformation $ h_1 $ -- since $ h_1 \circ X_1 $, however, is uniformly distributed on $ [0,1] $ one would still end up with a shuffle of $ M $ according to Definition~\ref{def:shuffle}.
	For generalized shuffles of copulas (via general $ \lambda $-preserving transformations) we 
	refer to \cite{Du-Se,FS-Tr} and the references therein.
	
	In the sequel we will let $ \mathcal{S}_d^N $ denote the family of all shuffles of $ M $ with resolution $ N $ and refer to
	\begin{equation*}
		\mathcal{S}_d^{\infty} \coloneqq \bigcup_{N=1}^{\infty} \mathcal{S}_d^N \subseteq \mathcal{P}_\textrm{ds}([0,1]^d)
	\end{equation*}
	as the family of all equidistant shuffles of $ M $.
	
	\begin{remark}
		\label{rem:shuffle-subclass}
		Notice that for every $ N_0 \in \NN $ we have
		\begin{equation*}
			\mathcal{S}_d^{\infty} = \bigcup_{N=N_0}^{\infty} \mathcal{S}_d^N
		\end{equation*}
		since obviously every element of $ \mathcal{S}_d^N $ also is an element of $ \mathcal{S}_d^{kN} $ for every $ k \geq 2 $.
		We will use this fact in the next section and only work with the case $ N \geq d $ or even $ N = kd $ for some $ k \in \NN $.
	\end{remark}
	
	\begin{lemma}
		\label{lem:dense}
		$ \mathcal{S}_d^{\infty} $ is dense in the compact metric space $ (\mathcal{P}_\textrm{ds}([0,1]^d),\rho_W) $.
	\end{lemma}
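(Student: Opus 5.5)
Given $\vartheta \in \mathcal{P}_\textrm{ds}([0,1]^d)$ and $\varepsilon > 0$, the plan is to construct, for large $N$, an equidistant shuffle $\vartheta_N \in \mathcal{S}_d^N$ with $\rho_W(\vartheta,\vartheta_N) \to 0$.

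The construction proceeds in two steps.

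\textbf{Step 1: discretize to a matrix.} Partition $[0,1]^d$ into the $N^d$ cubes $I_{j_1}^N \times \cdots \times I_{j_d}^N$ (closing the last interval at $1$), and set $m_{\mathbf{j}} := N\,\vartheta(I_{j_1}^N\times\cdots\times I_{j_d}^N)$. Since all marginals are $\lambda$, the array $(m_{\mathbf{j}})$ is a $d$-dimensional nonnegative array all of whose one-dimensional slice sums (fix one coordinate $j_i$ and sum over the rest) equal $1$. In other words, it is a multi-stochastic array.

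\textbf{Step 2: round to an integer array.} Pass to resolution $NK$ with $K$ large, and replace $K m_{\mathbf{j}}$ by integers $n_{\mathbf{j}}$ whose slice sums are all exactly $K$. This is the step I expect to be the main obstacle. For $d=2$, Birkhoff/integer rounding of transportation polytopes does it. For $d\ge 3$, however, multi-index transportation polytopes need not have integral vertices, so I would avoid exact rounding and instead proceed as follows.

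\begin{itemize}
\item Take $n_{\mathbf{j}} := \lfloor K m_{\mathbf{j}}\rfloor$. Each slice then has deficit at most $N^{d-1}$, since each slice has $N^{d-1}$ cells.
\item Fill the deficits by a "diagonal" correction: list, for each coordinate $i$ and each value $j$, the missing units, and pair them up greedily into extra $d$-tuples of cells. This is possible because every coordinate has the same total deficit $NK - \sum n_{\mathbf{j}}$.
\end{itemize}

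The resulting integer array has total mass error at most of order $N^d/K$. This tends to $0$ as $K\to\infty$ for fixed $N$.

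\textbf{Step 3: realize the integer array as a shuffle.} Now build the shuffle at resolution $NK$. Each fine interval $I_a^{NK}$ in the first coordinate lies in a coarse block $I_{j_1}^N$. Assign the $K$ fine subintervals of block $j_1$ bijectively to the multiset of tuples $\mathbf{j}$ with first index $j_1$, each tuple counted $n_{\mathbf{j}}$ times. Similarly, for coordinate $i$, the $K$ fine subintervals of coarse block $j_i$ are matched to the same tuples. This defines permutations $\sigma_2,\dots,\sigma_d$ of $[NK]$: a fine box in coordinate $1$ assigned to a tuple $\mathbf{j}$ is mapped, in coordinate $i$, to a fine box inside $I_{j_i}^N$ that is reserved for that same tuple occurrence. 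The resulting shuffle $\vartheta_N$ gives mass $n_{\mathbf{j}}/(NK)$ to each coarse cube, which differs from $m_{\mathbf{j}}/N = \vartheta(\text{cube})$ by a total of $O(N^d/K)$ summed over cubes.

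\textbf{Step 4: estimate the distance.} For any $1$-Lipschitz $f$, compare $\int f\,d\vartheta$ and $\int f\,d\vartheta_N$ cube by cube. Within a cube, $f$ varies by at most $\sqrt d/N$, so
\[
\rho_W(\vartheta,\vartheta_N)\le \frac{\sqrt d}{N} + \|f\|_\infty$-free mass error $\le \frac{\sqrt d}{N}+ C\frac{N^d}{K}.
\]
The mass-error term contributes after subtracting a constant from $f$, which is possible since $\operatorname{diam}[0,1]^d\le\sqrt d$. Choosing $N$ large and then $K \gg N^{d}$ gives the result.

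Alternatively, one can combine Step 1 with the equivalence between weak convergence and uniform convergence of copulas. In that case it suffices that the copulas agree on the grid up to small error, since copulas are $1$-Lipschitz in each argument.
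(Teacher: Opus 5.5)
Your proof is correct, but it takes a different route from the paper's. The paper argues probabilistically. It draws an i.i.d.\ sample of size $n$ from $\vartheta$. Because the marginals are continuous, the coordinate-wise pseudo-ranks almost surely define $d-1$ permutations of $[n]$. The $M$-interpolation of this rank configuration is a shuffle in $\mathcal{S}_d^n$ that is uniformly close to the empirical multilinear copula $\hat C_n$. Two results quoted from the literature then finish the argument: the almost sure uniform convergence $\hat C_n \to C_\vartheta$, and the equivalence of weak convergence with uniform convergence of copulas. In effect, sampling does the job of your Step 2: it produces an exact integer array with unit hyperplane sums for free, so rounding of multi-index arrays never arises.

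Your argument is instead deterministic and self-contained. You floor the discretized cube masses. You restore the margins exactly by concatenating the $d$ deficit lists, which share the common length $D<N^d$, and adding their $t$-th entries as tuples. You then realize the resulting array as a two-scale shuffle at resolution $NK$ and bound $\rho_W$ directly. This avoids both cited results and gives an explicit rate. After normalizing $f(\mathbf{0})=0$ one gets $\rho_W(\vartheta,\vartheta_N)\le \sqrt{d}/N + 2\sqrt{d}\,N^{d-1}/K$, so $K\ge N^d$ already yields $O(1/N)$. The price is the extra combinatorial bookkeeping.

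Two small remarks. First, your floor-and-fill step does achieve exact integer margins. What it gives up is only cellwise closeness to $Km_{\mathbf{j}}$, and that closeness can genuinely fail to be attainable for $d\ge 3$, so your caution there is justified. Second, the display in Step 4 is garbled, although the intended estimate above is right.
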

	\begin{proof}
		It has already been mentioned that weak convergence in $ \mathcal{P}_\textrm{ds}([0,1]^d) $ is equivalent to uniform convergence of the corresponding copulas.
		Let $ \vartheta \in \mathcal{P}_\textrm{ds}([0,1]^d) $ be arbitrary but fixed and let $ C_\vartheta $ denote the corresponding copula.
		Then, according to \cite{Ja-Sw-Ve} (also see \cite[Lemma 6.1]{Gr-Ju-Tr}), the so-called empirical (multilinear) copula $ \hat{C}_n $ (i.e., the copula induced by a sample from $ C_\vartheta $ via pseudo-ranks and multilinear interpolation) converges uniformly to $ C_\vartheta $ with probability one.
		For every $ n $, however, it is straightforward to verify that there exists a copula $ A_n $ corresponding to a $ d $-stochastic measure $ \nu_n \in \mathcal{S}_d^n $ fulfilling $ d_\infty(A_n,\hat{C}_n) \leq \frac{1}{n^{d-1}} $ (in fact, one may simply consider the $ M $-interpolation instead of the multilinear one corresponding to $ \Pi $).
		This directly yields that the sequence $ (\nu_n)_{n \in \NN} $ in $ \mathcal{S}_d^{\infty} $ converges weakly to $ \vartheta $ and the proof is complete.     
	\end{proof}

	\section{Calculating the upper bounds $ C_f^d $ for the $ d $-stochastic setting}
	\label{sec:bounds-d-stoch}
	
	In this section we will derive our main results regarding the $ d $-stochastic setting and 
	determine
	\begin{equation*}
		C_f^d = \sup_{\vartheta \in \mathcal{P}_\textrm{ds}([0,1]^d)} \int_{[0,1]^d} f \, \dd\vartheta
	\end{equation*}
	for each of the three functions $ f_{\min}, f_{\max}, f_{\sum} $.
	We start with the first two functions and work with equidistant shuffles.
	
	\begin{theorem}
		\label{thm:cop-fmin-fmax}
		For the two functions $ f_{\min} $ and $ f_{\max} $, defined according to 
		equations \eqref{eq:f-min} and \eqref{eq:f-max}, respectively, and for every $ d \geq 2 $ 
		we have the identities
		\begin{equation*}
			C_{f_{\min}}^d = \frac{1}{d} \qquad \text{and} \qquad C_{f_{\max}}^d = \frac{d-1}{d}.
		\end{equation*}
	\end{theorem}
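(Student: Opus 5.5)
By the previous lemma and Lemma~\ref{lem:dense}, $C_f^d$ is the supremum of $\int f\,\dd\vartheta$ over $\vartheta\in\mathcal{P}_\textrm{ds}([0,1]^d)$. For each function we therefore need an upper bound valid for every $d$-stochastic $\vartheta$, together with a sequence of shuffles of $M$ whose integrals approach the bound. The upper bounds are the substantive part; the constructions are explicit.

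\textbf{Upper bound for $f_{\max}$.} For $\xbf\in[0,1]^d$ we have $f_{\max}(\xbf)=\max_i x_i-\min_i x_i$. For any $\vartheta$ with uniform marginals, the law of $\max_i X_i$ is stochastically dominated by the law of the maximum of a "completely mixing" arrangement. Concretely, $\mathbb{P}(\max_i X_i\le t)\ge \max\{0,1-d(1-t)\}$ by the union bound. Hence
\[
\mathbb{E}\max_i X_i=\int_0^1 \mathbb{P}(\max_i X_i>t)\,\dd t\le\int_0^1\min\{1,d(1-t)\}\,\dd t = 1-\tfrac{1}{2d}.
\]
Symmetrically, $\mathbb{E}\min_i X_i\ge \frac{1}{2d}$. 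Subtracting gives $\int f_{\max}\,\dd\vartheta\le 1-\frac1d=\frac{d-1}{d}$.

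\textbf{Upper bound for $f_{\min}$.} Order the coordinates as $X_{(1)}\le\dots\le X_{(d)}$. Then $f_{\min}\le X_{(k+1)}-X_{(k)}$ for every $k$. Averaging over the $d-1$ gaps gives $f_{\min}\le \frac{1}{d-1}\bigl(X_{(d)}-X_{(1)}\bigr)=\frac{1}{d-1}f_{\max}$, hence $\int f_{\min}\,\dd\vartheta\le\frac{1}{d-1}\cdot\frac{d-1}{d}=\frac1d$ by the bound above. This step is clean, so $f_{\min}$ reduces to $f_{\max}$.

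\textbf{Attainment.} Take $N=kd$, following Remark~\ref{rem:shuffle-subclass}. Split $[0,1]$ into $d$ consecutive blocks $J_1,\dots,J_d$ of length $1/d$, each consisting of $k$ intervals $I_j^N$. Take the cyclic shift $h_i(x)=x+\frac{i-1}{d}\pmod 1$; this is a shuffle of $M$ with resolution $d$, hence lies in $\mathcal{S}_d^{N}$. For $x\in J_1$, with $x=s$ and $s\in[0,1/d)$, the coordinates are $s, s+\frac1d,\dots,s+\frac{d-1}{d}$. Their minimal gap is exactly $\frac1d$ and their range is $\frac{d-1}{d}$. Since the point set is a cyclic rotation of this for every $x$, the set of coordinates is always $\{s+\frac{m}{d}\}$ with $s\in[0,\frac1d)$. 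Hence $f_{\min}\equiv\frac1d$ and $f_{\max}\equiv\frac{d-1}{d}$ $\vartheta$-a.s., and both bounds are attained exactly. In particular the suprema are maxima.

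\textbf{Main obstacle.} The only delicate step is the estimate $\mathbb{E}\max X_i\le1-\frac1{2d}$, which is the union-bound (Fréchet) inequality. I would check it carefully, but it is standard, so I expect no real difficulty.
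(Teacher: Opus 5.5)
Your proof is correct. For $f_{\min}$ (the gap-averaging reduction $f_{\min}\le \frac{1}{d-1}f_{\max}$) and for attainment (the cyclic rotations $h_r$) it coincides with the paper's steps (i-2) and (ii). The real difference is the upper bound for $f_{\max}$.

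The paper first passes to equidistant shuffles via Lemma~\ref{lem:dense}. It then runs a combinatorial rearrangement of `black blocks' in the overlaid permutation checkerboards: highest blocks are pushed up and lowest blocks pushed down row by row, and both are then sorted. This yields the exact discrete bound $\frac{d-1}{d}N^2 - R\bigl(1-\frac{R}{d}\bigr)$ for resolution $N=dK+R$.

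You instead write $f_{\max}=\max_i x_i-\min_i x_i$ and bound the two terms separately with the union (Fr\'echet) bound on tails, which holds directly for every $d$-stochastic measure. This gives $\mathbb{E}\max_i X_i\le 1-\frac{1}{2d}$ and $\mathbb{E}\min_i X_i\ge \frac{1}{2d}$.

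Your route has several advantages:
\begin{itemize}
\item It is shorter and fully rigorous, whereas the paper's argument is largely pictorial.
\item It makes the density lemma superfluous for this theorem; only the identification of $C_f^d$ with the supremum over $\mathcal{P}_\textrm{ds}([0,1]^d)$ is needed. So your opening citation of Lemma~\ref{lem:dense} can be dropped.
\item It makes the equality case transparent. The union bounds must be tight: the events $\{X_i>t\}$ must be essentially pairwise disjoint for $t\ge 1-\frac1d$, and $\{X_i<t\}$ for $t\le \frac1d$. In addition, $\max_i X_i\ge 1-\frac1d$ and $\min_i X_i\le \frac1d$ must hold almost surely.
\end{itemize}

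The paper's combinatorial route gives, in exchange, a finer finite-resolution statement. Among shuffles of resolution $N$ the bound is attained only when $d\mid N$, and the deficit is exactly $R\bigl(1-\frac{R}{d}\bigr)/N^2$.
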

	\begin{proof}
		Before going into details we sketch the structure of the proof.
		(i)
		Our first step consists in proving that the stated maximal value is an upper bound for the integral.
		We achieve this goal by using denseness of the family of shuffles, which, in turn, transforms the maximization problem to the realm of permutations.
		We will proceed with step by step maximization and illustrate the arguments/steps for a small example graphically.
		(ii)
		The second step is to provide an example attaining the bound.
		
		\textbf{(i)} By Lemma~\ref{lem:dense}, it suffices to prove the upper bounds for 
		every $ \vartheta \in \mathcal{S}_d^{\infty} $.
		Therefore, let $ d < N \in \NN $ as well as $ \vartheta \in \mathcal{S}_d^N $ be arbitrary but fixed and denote the (piecewise linear) $ \lambda $-preserving transformations according to Definition~\ref{def:shuffle} by $ h_2, \ldots, h_d $.
		Using disintegration we have
		\begin{align*}
			\int_{[0,1]^d} f(\xbf) \, \dd\vartheta(\xbf) &= \int_{[0,1]} \int_{[0,1]^{d-1}} f(\xbf) \, K_{\vartheta}(x_1, \dd(x_2,\ldots,x_d)) \, \dd\lambda(x_1) \\
			&= \int_{[0,1]} f(x_1, h_2(x_1), \ldots, h_d(x_1)) \, \dd\lambda(x_1).
		\end{align*}
		The latter expression can be further simplified for $ f_{\min} $ and $ f_{\max} $:
		Each transformation $ h_i $ with $ i \geq 2 $ corresponds to a unique permutation $ \sigma_i $ of the set $ [N] $ and vice versa.
		In order to simplify notation we will set $ h_1 = \mathrm{id}_{[0,1]} $ as well as $ \sigma_1 = \mathrm{id}_{[N]} $, and set $ \symb_{\min} \coloneqq \min $ as well as $ \symb_{\max} \coloneqq \max $.
		Returning to the integral, for $ k \in \set{\min,\max} $ this yields
		\begin{align*}
			\int_{[0,1]^d} f_k(\xbf) \, \dd\vartheta(\xbf) &= \int_{[0,1]} f_k(x_1, h_2(x_1), \ldots, h_d(x_1)) \, \dd\lambda(x_1) \\
			&= \int_{[0,1]} \mathop{\symb_k}\limits_{i,j \in [d],\ i \neq j} \abs{h_i(x_1) - h_j(x_1)} \, \dd\lambda(x_1) \\
			&= \frac{1}{N} \sum_{n=1}^{N} \mathop{\symb_k}\limits_{i,j \in [d],\ i \neq j} \frac{\abs{\sigma_i(n) - \sigma_j(n)}}{N} \\
			&= \frac{1}{N^2} \sum_{n=1}^{N} \mathop{\symb_k}\limits_{i,j \in [d],\ i \neq j} \abs{\sigma_i(n) - \sigma_j(n)}.
		\end{align*}
		
		\textbf{(i-1)}
		We now consider $ f_{\max} $ in detail and derive an upper bound for 
		\begin{equation}
			\label{eq:sum-of-interest-max}
			\sum_{n=1}^{N} \max\limits_{i,j \in [d],\ i \neq j} \abs{\sigma_i(n) - \sigma_j(n)}.
		\end{equation}
		Making it easier to follow the subsequent arguments, one may view permutations of $ [N] $ as checkerboards with exactly one black field in each row and each column, respectively (see Figure~\ref{fig:permutation}).
		\begin{figure}[h!]
			\centering
			\scalebox{0.4}{
				\begin{tikzpicture}
					\draw[] (0,0) rectangle (7,7);
					\draw[] (0,0) grid (7,7);
					\fill[] (0,5) rectangle ++(1,1);
					\fill[] (1,6) rectangle ++(1,1);
					\fill[] (2,2) rectangle ++(1,1);
					\fill[] (3,4) rectangle ++(1,1);
					\fill[] (4,1) rectangle ++(1,1);
					\fill[] (5,0) rectangle ++(1,1);
					\fill[] (6,3) rectangle ++(1,1);
				\end{tikzpicture}
			}
			\caption{Example of a (checkerboard representation of a) permutation of $ [N] $ with $ N=7 $.}
			\label{fig:permutation}
		\end{figure}
		
		Equation \eqref{eq:sum-of-interest-max} involves exactly $ d $ permutations.
		When overlaying them in one picture, each row can contain at most $ d $ different black squares (note that they could overlap and therefore we have `at most' instead of `exactly').
		We are interested in maximizing the sum over the distances of the highest and lowest black block of each column.
		For addressing this, it is sufficient to depict only the highest and lowest (black) block within each column.
		We will illustrate the subsequent arguments graphically for $ d = 3 $ and $ N = 7 $.
		\begin{figure}[h!]
			\centering
			\scalebox{0.4}{
				\begin{tikzpicture}
					\node[] at (-13.5,1.8) {\scalebox{0.5}{
							\begin{tikzpicture}
								\draw[] (0,0) rectangle (7,7);
								\draw[] (0,0) grid (7,7);
								\fill[] (0,5) rectangle ++(1,1);
								\fill[] (1,6) rectangle ++(1,1);
								\fill[] (2,2) rectangle ++(1,1);
								\fill[] (3,4) rectangle ++(1,1);
								\fill[] (4,1) rectangle ++(1,1);
								\fill[] (5,0) rectangle ++(1,1);
								\fill[] (6,3) rectangle ++(1,1);
							\end{tikzpicture}
					}};
					\node[] at (-9.5,3.5) {\scalebox{0.5}{
							\begin{tikzpicture}
								\draw[] (0,0) rectangle (7,7);
								\draw[] (0,0) grid (7,7);
								\fill[] (0,1) rectangle ++(1,1);
								\fill[] (1,0) rectangle ++(1,1);
								\fill[] (2,5) rectangle ++(1,1);
								\fill[] (3,3) rectangle ++(1,1);
								\fill[] (4,6) rectangle ++(1,1);
								\fill[] (5,4) rectangle ++(1,1);
								\fill[] (6,2) rectangle ++(1,1);
							\end{tikzpicture}
					}};
					\node[] at (-5.5,5.2) {\scalebox{0.5}{
							\begin{tikzpicture}
								\draw[] (0,0) rectangle (7,7);
								\draw[] (0,0) grid (7,7);
								\fill[] (0,1) rectangle ++(1,1);
								\fill[] (1,6) rectangle ++(1,1);
								\fill[] (2,2) rectangle ++(1,1);
								\fill[] (3,3) rectangle ++(1,1);
								\fill[] (4,0) rectangle ++(1,1);
								\fill[] (5,5) rectangle ++(1,1);
								\fill[] (6,4) rectangle ++(1,1);
							\end{tikzpicture}
					}};
					\draw[-stealth, line width=5pt] (-2.5,3.5) -- ++(1.5,0);
					\draw[] (0,0) rectangle (7,7);
					\draw[] (0,0) grid (7,7);
					\fill[] (0,5) rectangle ++(1,1);
					\fill[] (0,1) rectangle ++(1,1);
					\fill[] (1,6) rectangle ++(1,1);
					\fill[] (1,0) rectangle ++(1,1);
					\fill[] (2,5) rectangle ++(1,1);
					\fill[] (2,2) rectangle ++(1,1);
					\fill[] (3,4) rectangle ++(1,1);
					\fill[] (3,3) rectangle ++(1,1);
					\fill[] (4,6) rectangle ++(1,1);
					\fill[] (4,0) rectangle ++(1,1);
					\fill[] (5,5) rectangle ++(1,1);
					\fill[] (5,0) rectangle ++(1,1);
					\fill[] (6,4) rectangle ++(1,1);
					\fill[] (6,2) rectangle ++(1,1);
				\end{tikzpicture}
			}
			\caption{Exemplary aggregation of $ d $ permutations of $ [N] $ for the case $ d=3, N=7 $.}
			\label{fig:agg-three-permutations}
		\end{figure}
		
		We start with our $ d $ arbitrary permutations (see Figure~\ref{fig:agg-three-permutations}) and first look at the highest row.
		If there are less than $ d $ different black squares in this row, then we move the highest black block of some column(s) not having a black square in the highest row up to the highest row until there are exactly $ d $ different black squares in the highest row or until there is no highest black block in a lower row remaining which could be moved up (see Figure~\ref{fig:opt-high-row}).
		Note that there cannot be more than $ d $ different black blocks in the highest row and thus we would never move highest blocks down instead of up.
		\begin{figure}[h!]
			\centering
			\scalebox{0.4}{
				\begin{tikzpicture}
					\draw[] (0,0) rectangle (7,7);
					\draw[] (0,0) grid (7,7);
					\fill[] (0,5) rectangle ++(1,1);
					\fill[] (0,1) rectangle ++(1,1);
					\fill[] (1,6) rectangle ++(1,1);
					\fill[] (1,0) rectangle ++(1,1);
					\fill[] (2,5) rectangle ++(1,1);
					\fill[] (2,2) rectangle ++(1,1);
					\fill[] (3,4) rectangle ++(1,1);
					\fill[] (3,3) rectangle ++(1,1);
					\fill[] (4,6) rectangle ++(1,1);
					\fill[] (4,0) rectangle ++(1,1);
					\fill[] (5,5) rectangle ++(1,1);
					\fill[] (5,0) rectangle ++(1,1);
					\fill[] (6,4) rectangle ++(1,1);
					\fill[] (6,2) rectangle ++(1,1);
					\draw[-stealth, line width=4pt, orange] (-1.5,6.5) -- ++(1,0);
					\draw[-stealth, line width=4pt, orange] (8.5,6.5) -- ++(-1,0);
					\draw[line width=3pt, blue] (3,4) rectangle ++(1,1);
					\draw[line width=3pt, blue] (3,6) rectangle ++(1,1);
					\draw[-stealth, line width=3pt, blue] (3.5,4.5) -- ++(0,2);
				\end{tikzpicture}
			}
			\caption{Establishing exactly $ d $ squares in the highest row.}
			\label{fig:opt-high-row}
		\end{figure}
		
		This procedure of moving black blocks upwards only increases the sum of distances between the highest and lowest black block in each column and therefore only increases the sum of interest \eqref{eq:sum-of-interest-max}.
		Next we take a look at the second to highest row.
		If there are less than $ d $ different black columns in this row, then we move the highest black block of some column(s), where it currently is located in a lower row, up to the currently handled row until there are exactly $ d $ different black columns in the currently handled row or until there is no highest black block in a lower row remaining which could be moved up (see Figure~\ref{fig:opt-sec-high-row}).
		\begin{figure}[h!]
			\centering
			\scalebox{0.4}{
				\begin{tikzpicture}
					\draw[] (0,0) rectangle (7,7);
					\draw[] (0,0) grid (7,7);
					\fill[] (0,5) rectangle ++(1,1);
					\fill[] (0,1) rectangle ++(1,1);
					\fill[] (1,6) rectangle ++(1,1);
					\fill[] (1,0) rectangle ++(1,1);
					\fill[] (2,5) rectangle ++(1,1);
					\fill[] (2,2) rectangle ++(1,1);
					\fill[] (3,6) rectangle ++(1,1);
					\fill[] (3,3) rectangle ++(1,1);
					\fill[] (4,6) rectangle ++(1,1);
					\fill[] (4,0) rectangle ++(1,1);
					\fill[] (5,5) rectangle ++(1,1);
					\fill[] (5,0) rectangle ++(1,1);
					\fill[] (6,4) rectangle ++(1,1);
					\fill[] (6,2) rectangle ++(1,1);
					\draw[-stealth, line width=4pt, orange] (-1.5,5.5) -- ++(1,0);
					\draw[-stealth, line width=4pt, orange] (8.5,5.5) -- ++(-1,0);
				\end{tikzpicture}
			}
			\caption{Exactly $ d=3 $ black squares in the second row.}
			\label{fig:opt-sec-high-row}
		\end{figure}
		
		Again this procedure of moving black blocks only increases the sum of distances between the highest and lowest black block in each column.
		We proceed in this row-wise manner until there are no highest black blocks in a row below the currently handled one left (see Figure~\ref{fig:finish-highest-blocks}).
		\begin{figure}[h!]
			\centering
			\scalebox{0.4}{
				\begin{tikzpicture}
					\draw[] (0,0) rectangle (7,7);
					\draw[] (0,0) grid (7,7);
					\fill[] (0,5) rectangle ++(1,1);
					\fill[] (0,1) rectangle ++(1,1);
					\fill[] (1,6) rectangle ++(1,1);
					\fill[] (1,0) rectangle ++(1,1);
					\fill[] (2,5) rectangle ++(1,1);
					\fill[] (2,2) rectangle ++(1,1);
					\fill[] (3,6) rectangle ++(1,1);
					\fill[] (3,3) rectangle ++(1,1);
					\fill[] (4,6) rectangle ++(1,1);
					\fill[] (4,0) rectangle ++(1,1);
					\fill[] (5,5) rectangle ++(1,1);
					\fill[] (5,0) rectangle ++(1,1);
					\fill[] (6,4) rectangle ++(1,1);
					\fill[] (6,2) rectangle ++(1,1);
					\draw[-stealth, line width=4pt, orange] (-1.5,4.5) -- ++(1,0);
					\draw[-stealth, line width=4pt, orange] (8.5,4.5) -- ++(-1,0);
					\draw[line width=3pt, green!50!black] (-1,4) -- (8,4);
				\end{tikzpicture}
			}
			\caption{Finishing the rearrangment of the highest black blocks.}
			\label{fig:finish-highest-blocks}
		\end{figure}
		
		At this point we stop looking at the highest block in the columns and switch to looking at the lowest black block within each column.
		Here we proceed analogously to the previous steps but move the blocks down instead of up.
		Starting with the lowest row and jumping up line by line, we check for the current row, 
		whether there are less than $ d $ different black blocks.
		In that case we move the lowest block of some columns where it currently is located in a higher row down to the currently handled row until there are exactly $ d $ different black columns in the currently handled row or until there is no lowest black block in a higher row remaining which could be moved down.
		The procedure ends when there are no lowest black blocks in a row above the currently handled one left.
		As before, this procedure of moving black blocks only increases the sum of distances between the highest and lowest black block in each column (see Figure~\ref{fig:opt-lowest-blocks}).
		\begin{figure}[h!]
			\centering
			\scalebox{0.4}{
				\begin{tikzpicture}
					\draw[] (0,0) rectangle (7,7);
					\draw[] (0,0) grid (7,7);
					\fill[] (0,5) rectangle ++(1,1);
					\fill[] (0,1) rectangle ++(1,1);
					\fill[] (1,6) rectangle ++(1,1);
					\fill[] (1,0) rectangle ++(1,1);
					\fill[] (2,5) rectangle ++(1,1);
					\fill[] (2,2) rectangle ++(1,1);
					\fill[] (3,6) rectangle ++(1,1);
					\fill[] (3,3) rectangle ++(1,1);
					\fill[] (4,6) rectangle ++(1,1);
					\fill[] (4,0) rectangle ++(1,1);
					\fill[] (5,5) rectangle ++(1,1);
					\fill[] (5,0) rectangle ++(1,1);
					\fill[] (6,4) rectangle ++(1,1);
					\fill[] (6,2) rectangle ++(1,1);
					\draw[line width=3pt, blue] (2,2) rectangle ++(1,1);
					\draw[line width=3pt, blue] (2,1) rectangle ++(1,1);
					\draw[-stealth, line width=3pt, blue] (2.5,2.5) -- ++(0,-1);
					\draw[line width=3pt, blue] (6,2) rectangle ++(1,1);
					\draw[line width=3pt, blue] (6,1) rectangle ++(1,1);
					\draw[-stealth, line width=3pt, blue] (6.5,2.5) -- ++(0,-1);
					\draw[line width=3pt, blue] (3,3) rectangle ++(1,1);
					\draw[line width=3pt, blue] (3,2) rectangle ++(1,1);
					\draw[-stealth, line width=3pt, blue] (3.5,3.5) -- ++(0,-1);
				\end{tikzpicture}
			}
			\caption{Optimize the lowest black blocks.}
			\label{fig:opt-lowest-blocks}
		\end{figure}
		
		This procedure yields the situation in which the row containing the lowest one of the highest black blocks of each column is \emph{not below} the row containing the highest one of the lowest black blocks of each column;
		depending on $ d $ and $ N $, it is either above or they are equal.
		For seeing this, note that the highest (or lowest, respectively) blocks of the columns, after the before described tuning, will range over $ \lceil \frac{N}{d} \rceil $ rows (see Figure~\ref{fig:sep-high-low}).
		\begin{figure}[h!]
			\centering
			\scalebox{0.4}{
				\begin{tikzpicture}
					\draw[] (0,0) rectangle (7,7);
					\draw[] (0,0) grid (7,7);
					\fill[] (0,5) rectangle ++(1,1);
					\fill[] (0,1) rectangle ++(1,1);
					\fill[] (1,6) rectangle ++(1,1);
					\fill[] (1,0) rectangle ++(1,1);
					\fill[] (2,5) rectangle ++(1,1);
					\fill[] (2,1) rectangle ++(1,1);
					\fill[] (3,6) rectangle ++(1,1);
					\fill[] (3,2) rectangle ++(1,1);
					\fill[] (4,6) rectangle ++(1,1);
					\fill[] (4,0) rectangle ++(1,1);
					\fill[] (5,5) rectangle ++(1,1);
					\fill[] (5,0) rectangle ++(1,1);
					\fill[] (6,4) rectangle ++(1,1);
					\fill[] (6,1) rectangle ++(1,1);
					\draw[line width=3pt, green!50!black] (-1,4) -- (8,4);
					\draw[line width=3pt, green!50!black] (-1,3) -- (8,3);
				\end{tikzpicture}
			}
			\caption{Separation of highest and lowest black blocks.}
			\label{fig:sep-high-low}
		\end{figure}
		
		Starting with a constellation of the afore-mentioned type we now calculate the sum of distances between the highest and lowest black block in each column, which yields an upper bound for the sum in expression \eqref{eq:sum-of-interest-max}.
		If in this configuration we exchange the highest black blocks from two columns, then the sum of distances between the highest and lowest black block in each column will stay constant.
		Namely, the distance in one column increases by exactly the same ammount as the distance in the other column decreases.
		Recall that we only consider the highest and lowest black block of each column and that they are somewhat separated.
		Moreover, this exchanging property analogously holds for the lowest black blocks.
		Together they justify to sort the blocks in such a way that the highest black blocks are in decreasing, and the lowest black blocks are in increasing order (see Figure~\ref{fig:reorder-high-low}).
		\begin{figure}[h!]
			\centering
			\scalebox{0.4}{
				\begin{tikzpicture}
					\draw[] (0,0) rectangle (7,7);
					\draw[] (0,0) grid (7,7);
					\fill[] (0,6) rectangle ++(1,1);
					\fill[] (0,0) rectangle ++(1,1);
					\fill[] (1,6) rectangle ++(1,1);
					\fill[] (1,0) rectangle ++(1,1);
					\fill[] (2,6) rectangle ++(1,1);
					\fill[] (2,0) rectangle ++(1,1);
					\fill[] (3,5) rectangle ++(1,1);
					\fill[] (3,1) rectangle ++(1,1);
					\fill[] (4,5) rectangle ++(1,1);
					\fill[] (4,1) rectangle ++(1,1);
					\fill[] (5,5) rectangle ++(1,1);
					\fill[] (5,1) rectangle ++(1,1);
					\fill[] (6,4) rectangle ++(1,1);
					\fill[] (6,2) rectangle ++(1,1);
				\end{tikzpicture}
			}
			\caption{Reordering of the highest and lowest black blocks.}
			\label{fig:reorder-high-low}
		\end{figure}
		
		In this situation, however, it is now straightforward to compute the sum $ S $ of distances, which yields an upper bound as desired.
		Writing $ N = dK + R $ with $ 0 \leq R < d $, we get
		\begin{align*}
			\sum_{n=1}^{N} \max\limits_{i,j \in [d],\ i \neq j} \abs{\sigma_i(n) - \sigma_j(n)} &\leq S \\
			&= d \sum_{\ell = 1}^{K} (N - (2\ell-1)) + R (N - (2K+1)) \\
			&= dKN - d \sum_{\ell = 1}^{K} (2\ell-1) + RN - R (2K + 1) \\
			&= N^2 - d K^2 - KR - KR - R \\
			&= N^2 - KN - KR - R \\
			&= N ((d-1)K + R) - KR - R \\
			&= N \cdot \frac{d-1}{d} N + N \cdot \frac{1}{d} R - KR - R \\
			&= \frac{d-1}{d} N^2 - R \left( 1 - \frac{R}{d} \right) \\
			&\leq \frac{d-1}{d} N^2.
		\end{align*}
		Returning to the original integral, this altogether yields
		\begin{equation}
			\label{eq:int-bound-max}
			\int_{[0,1]^d} f_{\max}(\xbf) \, \dd\vartheta(\xbf) = \frac{1}{N^2} \sum_{n=1}^{N} \max\limits_{i,j \in [d],\ i \neq j} \abs{\sigma_i(n) - \sigma_j(n)} \leq \frac{d-1}{d}
		\end{equation}
		which completes the proof for the upper bound concerning $ f_{\max} $.
		
		\textbf{(i-2)}
		Having established this upper bound helps us to determine an upper bound for the corresponding integral over the function $ f_{\min} $.
		We now aim at bounding
		\begin{equation*}
			\sum_{n=1}^{N} \min\limits_{i,j \in [d],\ i \neq j} \abs{\sigma_i(n) - \sigma_j(n)}
		\end{equation*}
		from above.
		As before, we will argue with shifting `black blocks'.
		As soon as the highest and lowest black block for a column is fixed, the minimum distance of two blocks in this column is at most $ \frac{1}{d-1} $ times the distance of the highest and lowest black block in this column (equidistant placing).
		Even if this cannot always be realized, it still yields a numerical upper bound.
		Summing up over all columns yields
		\begin{equation*}
			\sum_{n=1}^{N} \min\limits_{i,j \in [d],\ i \neq j} \abs{\sigma_i(n) - \sigma_j(n)} \leq \frac{1}{d-1} \sum_{n=1}^{N} \max\limits_{i,j \in [d],\ i \neq j} \abs{\sigma_i(n) - \sigma_j(n)}.
		\end{equation*}
		Therefore, using \eqref{eq:int-bound-max}, we end with
		\begin{equation*}
			\int_{[0,1]^d} f_{\min}(\xbf) \, \dd\vartheta(\xbf) = \frac{1}{N^2} \sum_{n=1}^{N} \min\limits_{i,j \in [d],\ i \neq j} \abs{\sigma_i(n) - \sigma_j(n)} \leq \frac{1}{d}
		\end{equation*}
		which completes the proof of the upper bound for $ f_{\min} $.
		
		\textbf{(ii)}
		The last step consists in providing examples showing that the established bounds are already sharp.
		For this we consider (the rotations)
		\begin{equation}
			\label{eq:equidist-shuffle}
			h_r(t) \coloneqq t + \frac{r-1}{d} - \left\lfloor t + \frac{r-1}{d} \right\rfloor
		\end{equation}
		for $ r \in \set{1,\ldots,d} $.
		Plotting these functions jointly in one coordinate system clarifies the reason for choosing shuffles of this type -- the bundle of functions $ h_r $ describes equidistant shifting/rotating (see Figure~\ref{fig:equidist-shuffle}).
		\begin{figure}[h!]
			\centering
			\scalebox{0.8}{
				\begin{tikzpicture}
					\draw[] (0,0) rectangle (3,3);
					\draw[] (0,0) grid (3,3);
					\draw[line width=3pt, blue] (0,0) -- (3,3);
					\draw[line width=3pt, green] (0,1) -- (2,3);
					\draw[line width=3pt, green] (2,0) -- (3,1);
					\draw[line width=3pt, red] (0,2) -- (1,3);
					\draw[line width=3pt, red] (1,0) -- (3,2);
				\end{tikzpicture}
			}
			\caption{The rotations considered in step (ii) for the special case $ d=3 $.}
			\label{fig:equidist-shuffle}
		\end{figure}
		Letting $ \vartheta^* \in \mathcal{P}_\textrm{ds}([0,1]^d)$ denote the corresponding $ d $-stochastic measure, a straightforward calculation yields
		\begin{equation*}
			\int_{[0,1]^d} f_{\min}(\xbf) \, \dd\vartheta^*(\xbf) = \int_{[0,1]} f_{\min}(x_1, h_2(x_1), \ldots, h_d(x_1)) \, \dd\lambda(x_1) = \frac{1}{d}
		\end{equation*}
		as well as
		\begin{equation*}
			\int_{[0,1]^d} f_{\max}(\xbf) \, \dd\vartheta^*(\xbf) = \int_{[0,1]} f_{\max}(x_1, h_2(x_1), \ldots, h_d(x_1)) \, \dd\lambda(x_1) = \frac{d-1}{d}.
		\end{equation*}
		In other words: the upper bounds are attainable and the proof is complete.
		Figure~\ref{fig:sample-shuffle} depicts a sample from $ \vartheta^* $ as well as its bivariate marginal samples.
	\end{proof}
	\begin{figure}[h!]
		\centering
		\includegraphics[width=0.7\linewidth]{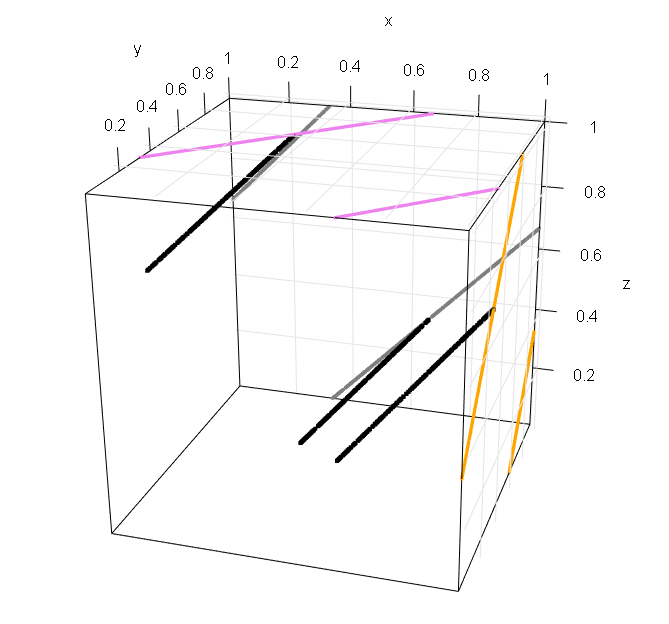}
		\caption{Sample (black) of size $ 1.000 $ of the $ 3 $-stochastic measure $ \vartheta^* $ attaining the upper bound $ \frac{1}{3} $ for $ f_{\min} $ and $ \frac{2}{3} $ for $ f_{\max} $ (as considered in step (ii) in the proof of Theorem~\ref{thm:cop-fmin-fmax}). The gray/orange/pink points denote the bivariate marginal samples.}
		\label{fig:sample-shuffle}
	\end{figure}
	
	Notice that the above proof also shows that the maximal value for the integral can in case of equidistant shuffles only be attained if the resolution $ N $ is a multiple of the dimension $ d $.
	When proving the following theorem concerning the function $ f_{\sum} $, we have to use a slightly different proof strategy due to higher complexity of the involved function.
	
	\begin{theorem}
		\label{thm:cop-fsum}
		For the function $ f_{\sum} $ as defined in \eqref{eq:f-sum} and every $ d \geq 2 $ we have
		\begin{equation*}
			C_{f_{\sum}}^d = \frac{d^2-1}{3}.
		\end{equation*}
	\end{theorem}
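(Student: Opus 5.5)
By the preceding lemma, $C_{f_{\sum}}^d$ equals the supremum of $\int f_{\sum}\,\dd\vartheta$ over $d$-stochastic $\vartheta$. I will bound this integral using only the marginals and a layer-cake representation; no shuffle approximation is needed. The starting point is the identity
\begin{equation*}
\abs{x_i-x_j} = \int_0^1 \bigl(\ind{[0,t]}(x_i)\,\ind{(t,1]}(x_j) + \ind{(t,1]}(x_i)\,\ind{[0,t]}(x_j)\bigr)\,\dd t .
\end{equation*}
Summing over all ordered pairs $(i,j)\in[d]^2$ gives
\begin{equation*}
f_{\sum}(\xbf) = 2\int_0^1 L_t(\xbf)\,\bigl(d-L_t(\xbf)\bigr)\,\dd t, \qquad L_t(\xbf) \coloneqq \#\set{i \in [d] : x_i \leq t}.
\end{equation*}
Fubini then yields
\begin{equation*}
\int f_{\sum}\,\dd\vartheta = 2\int_0^1 \mathbb{E}_\vartheta\bigl[L_t(d-L_t)\bigr]\,\dd t.
\end{equation*}

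Fix $t$. Since every marginal is $\lambda$, the integer-valued random variable $L_t$ satisfies $\mathbb{E}_\vartheta[L_t] = dt$. The function $g(k)=k(d-k)$ is concave on $\set{0,\ldots,d}$. Among distributions on the integers with a given mean, the expectation of a concave function is maximised by the distribution concentrated on the two consecutive integers surrounding the mean. Write $dt = m+\alpha$ with $m=\lfloor dt\rfloor$ and $\alpha\in[0,1)$. This gives
\begin{equation*}
\mathbb{E}_\vartheta\bigl[L_t(d-L_t)\bigr] \leq (1-\alpha)\,g(m)+\alpha\, g(m+1) = dt(d-dt) + \alpha(1-\alpha).
\end{equation*}
To verify this, note that $\mathbb{E}[L_t(d-L_t)] = d\,\mathbb{E}L_t - \mathbb{E}L_t^2$, and that $\mathbb{E}L_t^2 \geq m^2+\alpha(2m+1)$ for integer-valued $L_t$ with mean $m+\alpha$. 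The latter holds because $(k-m)(k-m-1)\geq 0$ for every integer $k$.

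Integrating the bound over $t$ gives
\begin{equation*}
2\int_0^1 d^2 t(1-t)\,\dd t + 2\int_0^1 \alpha(1-\alpha)\,\dd t = \frac{d^2}{3} + 2\cdot\frac16 = \frac{d^2-1}{3}+\frac23?
\end{equation*}
This needs rechecking. We have $\int_0^1 \{dt\}(1-\{dt\})\,\dd t = \frac16$, so the total is $\frac{d^2}{3}+\frac13 = \frac{d^2+1}{3}$, which does not match. The sign must be reconsidered. Since $\mathbb{E}L^2 \geq m^2+\alpha(2m+1) = (m+\alpha)^2 + \alpha(1-\alpha)$, the bound is actually $dt(d-dt) - \alpha(1-\alpha)$. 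With this correction the total is $\frac{d^2}{3}-\frac13 = \frac{d^2-1}{3}$, which is exactly the claimed value. This yields the upper bound.

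For sharpness I will reuse the rotations $h_r$ from \eqref{eq:equidist-shuffle} together with the corresponding $\vartheta^*$ from the proof of Theorem~\ref{thm:cop-fmin-fmax}. For every $t$, the points $x_1,h_2(x_1),\ldots,h_d(x_1)$ form a translate mod $1$ of the grid $\frac1d\mathbb{Z}$. Hence $L_t$ takes only the values $\lfloor dt\rfloor$ and $\lfloor dt\rfloor+1$, so the two-point extremal distribution is realised for every $t$ and equality holds throughout. As a cross-check I can compute directly: for points $s+\frac{r}{d}$ mod $1$, $r=0,\ldots,d-1$, the ordered-pair sum of distances equals $\frac{d^2-1}{3}$, independently of $s$.

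The only delicate step is the pointwise concavity (Jensen-type) bound for integer-valued variables. The layer-cake rewriting is what reduces the $d$-dimensional optimisation to a one-parameter problem depending only on the marginals, and that reduction is the key idea of the argument.
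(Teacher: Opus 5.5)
Your argument is correct, and it takes a genuinely different route from the paper. The paper first restricts to equidistant shuffles via the density lemma (Lemma~\ref{lem:dense}) with resolution $N=d\ell$. It then runs a combinatorial rearrangement of \emph{black blocks} (levels, locking of row groups, moves justified by the triangle inequality) until every group of $\ell$ rows holds exactly one block per column. Sorting columns within groups then exhibits the rotation shuffle \eqref{eq:equidist-shuffle}.

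You instead write $f_{\sum}=2\int_0^1 L_t(d-L_t)\,\dd t$ and bound the integrand pointwise in $t$. For this you use only the marginal constraint $\mathbb{E}_\vartheta L_t=dt$ and the integrality inequality $(L_t-m)(L_t-m-1)\geq 0$.

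Your approach has several advantages:
\begin{itemize}
\item The bound holds for every $d$-stochastic $\vartheta$ directly. You need no density lemma, no divisibility reduction, and no monotonicity claims about a rearrangement algorithm, and each step is a checkable inequality.
\item It gives the exact identity
\[
\frac{d^2-1}{3}-\int_{[0,1]^d} f_{\sum}\,\dd\vartheta = 2\int_0^1 \mathbb{E}_\vartheta\bigl[(L_t-\lfloor dt\rfloor)(L_t-\lfloor dt\rfloor-1)\bigr]\,\dd t .
\]
This characterises all maximisers: they are exactly those $\vartheta$ with $L_t\in\set{\lfloor dt\rfloor,\lfloor dt\rfloor+1}$ $\vartheta$-a.s.\ for a.e.\ $t$. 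The paper's method does not yield this.
\item The same threshold idea also recovers the $f_{\max}$ bound quickly. Writing $f_{\max}=\int_0^1 \ind{\set{1\le L_t\le d-1}}\,\dd t$ and using $\vartheta(L_t\geq 1)\le dt$ (and symmetrically near $t=1$) gives $\frac{d-1}{d}$.
\end{itemize}

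The paper's permutation picture, in exchange, is uniform with its treatment of $f_{\max}$ and $f_{\min}$, and it lets the extremal shuffle emerge constructively.

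One presentation fix is needed. Your first display has the wrong sign: the two-point value is $(1-\alpha)g(m)+\alpha g(m+1)=dt(d-dt)-\alpha(1-\alpha)$. State this directly and remove the intermediate computation with the question mark and the self-correction.
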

	\begin{proof}
		As before we start with sketching the proof structure:
		(i)
		Similar to the proof of Theorem~\ref{thm:cop-fmin-fmax} we will use denseness of shuffles to translate the problem to the language of permutations.
		Here, once more we will step by step maximize the function of interest to establish an upper bound.
		(ii)
		The obtained maximal configuration then turns out to be induced by a well known shuffle, which completes the proof.
		
		\textbf{(i)}
		Using continuity of $ f_{\sum} $, according to Lemma~\ref{lem:dense} it suffices to prove the upper bound for every $ \vartheta \in \mathcal{S}_d^{\infty} $.
		According to Remark~\ref{rem:shuffle-subclass}, without loss of generality we may consider the case $ N = d\ell $ for some integer $ \ell $.
		Let $ \vartheta \in \mathcal{S}_d^N $ be arbitrary but fixed and again let $ h_2, \ldots, h_d $ denote the corresponding $ \lambda $-preserving transformations.
		Analogously to the proof of Theorem~\ref{thm:cop-fmin-fmax}, we then get
		\begin{align*}
			\int_{[0,1]^d} f_{\sum}(\xbf) \, \dd\vartheta(\xbf) &= \int_{[0,1]} f_{\sum}(x_1, h_2(x_1), \ldots, h_d(x_1)) \, \dd\lambda(x_1) \\
			&= \frac{1}{N^2} \sum_{n=1}^{N} \sum\limits_{i,j \in [d]} \abs{\sigma_i(n) - \sigma_j(n)},
		\end{align*}
		where the permutations $ \sigma_i $ as above correspond to the transformations $ h_i $ (to keep notation simple we will again write $ h_1 = \mathrm{id}_{[0,1]} $ as well as $ \sigma_1 = \mathrm{id}_{[N]} $).
		We want to derive an upper bound for
		\begin{equation*}
			\sum_{n=1}^{N} \sum\limits_{i,j \in [d]} \abs{\sigma_i(n) - \sigma_j(n)}
		\end{equation*}
		and proceed as follows again working with `black blocks'.
		For the illustration of our arguments we consider the special case $ d = 4 $ and $ \ell = 2 $.
		In the same way as for the function $ f_{\max} $ inside the proof of Theorem~\ref{thm:cop-fmin-fmax} we first overlay all $ d $ permutations in one joint picture.
		Contrary to before, however, we are now interested in all differences, and therefore keep all the blocks, not just the highest and lowest one.
		It is likely that there are some overlaps.
		To indicate overlaps we write the corresponding numbers into the black blocks (see Figure~\ref{fig:agg-four-permutations}).
		\begin{figure}[h!]
			\centering
			\scalebox{0.4}{
				\begin{tikzpicture}
					\node[] at (-19,2) {\scalebox{0.5}{
							\begin{tikzpicture}
								\draw[] (0,0) rectangle (8,8);
								\draw[] (0,0) grid (8,8);
								\fill[] (0,5) rectangle ++(1,1);
								\fill[] (1,6) rectangle ++(1,1);
								\fill[] (2,2) rectangle ++(1,1);
								\fill[] (3,4) rectangle ++(1,1);
								\fill[] (4,1) rectangle ++(1,1);
								\fill[] (5,0) rectangle ++(1,1);
								\fill[] (6,3) rectangle ++(1,1);
								\fill[] (7,7) rectangle ++(1,1);
							\end{tikzpicture}
					}};
					\node[] at (-14.5,3.3) {\scalebox{0.5}{
							\begin{tikzpicture}
								\draw[] (0,0) rectangle (8,8);
								\draw[] (0,0) grid (8,8);
								\fill[] (0,1) rectangle ++(1,1);
								\fill[] (1,0) rectangle ++(1,1);
								\fill[] (2,5) rectangle ++(1,1);
								\fill[] (3,3) rectangle ++(1,1);
								\fill[] (4,6) rectangle ++(1,1);
								\fill[] (5,4) rectangle ++(1,1);
								\fill[] (6,2) rectangle ++(1,1);
								\fill[] (7,7) rectangle ++(1,1);
							\end{tikzpicture}
					}};
					\node[] at (-10,4.7) {\scalebox{0.5}{
							\begin{tikzpicture}
								\draw[] (0,0) rectangle (8,8);
								\draw[] (0,0) grid (8,8);
								\fill[] (0,1) rectangle ++(1,1);
								\fill[] (1,6) rectangle ++(1,1);
								\fill[] (2,2) rectangle ++(1,1);
								\fill[] (3,3) rectangle ++(1,1);
								\fill[] (4,0) rectangle ++(1,1);
								\fill[] (5,5) rectangle ++(1,1);
								\fill[] (6,4) rectangle ++(1,1);
								\fill[] (7,7) rectangle ++(1,1);
							\end{tikzpicture}
					}};
					\node[] at (-5.5,6) {\scalebox{0.5}{
							\begin{tikzpicture}
								\draw[] (0,0) rectangle (8,8);
								\draw[] (0,0) grid (8,8);
								\fill[] (0,7) rectangle ++(1,1);
								\fill[] (1,3) rectangle ++(1,1);
								\fill[] (2,5) rectangle ++(1,1);
								\fill[] (3,4) rectangle ++(1,1);
								\fill[] (4,1) rectangle ++(1,1);
								\fill[] (5,0) rectangle ++(1,1);
								\fill[] (6,6) rectangle ++(1,1);
								\fill[] (7,2) rectangle ++(1,1);
							\end{tikzpicture}
					}};
					\draw[-stealth, line width=5pt] (-2.5,4) -- ++(1.5,0);
					\draw[] (0,0) rectangle (8,8);
					\draw[] (0,0) grid (8,8);
					\fill[] (0,5) rectangle ++(1,1);
					\fill[] (1,6) rectangle ++(1,1);
					\fill[] (2,2) rectangle ++(1,1);
					\fill[] (3,4) rectangle ++(1,1);
					\fill[] (4,1) rectangle ++(1,1);
					\fill[] (5,0) rectangle ++(1,1);
					\fill[] (6,3) rectangle ++(1,1);
					\fill[] (7,7) rectangle ++(1,1);
					\fill[] (0,1) rectangle ++(1,1);
					\fill[] (1,0) rectangle ++(1,1);
					\fill[] (2,5) rectangle ++(1,1);
					\fill[] (3,3) rectangle ++(1,1);
					\fill[] (4,6) rectangle ++(1,1);
					\fill[] (5,4) rectangle ++(1,1);
					\fill[] (6,2) rectangle ++(1,1);
					\fill[] (7,7) rectangle ++(1,1);
					\fill[] (0,1) rectangle ++(1,1);
					\fill[] (1,6) rectangle ++(1,1);
					\fill[] (2,2) rectangle ++(1,1);
					\fill[] (3,3) rectangle ++(1,1);
					\fill[] (4,0) rectangle ++(1,1);
					\fill[] (5,5) rectangle ++(1,1);
					\fill[] (6,4) rectangle ++(1,1);
					\fill[] (7,7) rectangle ++(1,1);
					\fill[] (0,7) rectangle ++(1,1);
					\fill[] (1,3) rectangle ++(1,1);
					\fill[] (2,5) rectangle ++(1,1);
					\fill[] (3,4) rectangle ++(1,1);
					\fill[] (4,1) rectangle ++(1,1);
					\fill[] (5,0) rectangle ++(1,1);
					\fill[] (6,6) rectangle ++(1,1);
					\fill[] (7,2) rectangle ++(1,1);
					\node[white] at (5.5,0.5) {\huge $ \mathbf{2} $};
					\node[white] at (0.5,1.5) {\huge $ \mathbf{2} $};
					\node[white] at (4.5,1.5) {\huge $ \mathbf{2} $};
					\node[white] at (2.5,2.5) {\huge $ \mathbf{2} $};
					\node[white] at (3.5,3.5) {\huge $ \mathbf{2} $};
					\node[white] at (3.5,4.5) {\huge $ \mathbf{2} $};
					\node[white] at (2.5,5.5) {\huge $ \mathbf{2} $};
					\node[white] at (1.5,6.5) {\huge $ \mathbf{2} $};
					\node[white] at (7.5,7.5) {\huge $ \mathbf{3} $};
				\end{tikzpicture}
			}
			\caption{Aggregation of $ d = 4 $ permutations with some overlaps.}
			\label{fig:agg-four-permutations}
		\end{figure}
		
		We partition the grid into $ d $ groups of $ \ell $ adjacent rows, i.e., the first $ \ell $ rows form a group, the next $ \ell $ rows another one, and so on.
		Next we introduce the notion of the `level' of a black block.
		All blocks in the highest group as well as those in the lowest group have level $ 1 $.
		Blocks located in the second to highest or second to lowest group have level $ 2 $, and so on.
		Rows that already have a (lower) level number are not releveled (see Figure~\ref{fig:bb-levels}).
		\begin{figure}[h!]
			\centering
			\scalebox{0.4}{
				\begin{tikzpicture}
					\draw[] (0,0) rectangle (8,8);
					\draw[] (0,0) grid (8,8);
					\fill[] (0,5) rectangle ++(1,1);
					\fill[] (1,6) rectangle ++(1,1);
					\fill[] (2,2) rectangle ++(1,1);
					\fill[] (3,4) rectangle ++(1,1);
					\fill[] (4,1) rectangle ++(1,1);
					\fill[] (5,0) rectangle ++(1,1);
					\fill[] (6,3) rectangle ++(1,1);
					\fill[] (7,7) rectangle ++(1,1);
					\fill[] (0,1) rectangle ++(1,1);
					\fill[] (1,0) rectangle ++(1,1);
					\fill[] (2,5) rectangle ++(1,1);
					\fill[] (3,3) rectangle ++(1,1);
					\fill[] (4,6) rectangle ++(1,1);
					\fill[] (5,4) rectangle ++(1,1);
					\fill[] (6,2) rectangle ++(1,1);
					\fill[] (7,7) rectangle ++(1,1);
					\fill[] (0,1) rectangle ++(1,1);
					\fill[] (1,6) rectangle ++(1,1);
					\fill[] (2,2) rectangle ++(1,1);
					\fill[] (3,3) rectangle ++(1,1);
					\fill[] (4,0) rectangle ++(1,1);
					\fill[] (5,5) rectangle ++(1,1);
					\fill[] (6,4) rectangle ++(1,1);
					\fill[] (7,7) rectangle ++(1,1);
					\fill[] (0,7) rectangle ++(1,1);
					\fill[] (1,3) rectangle ++(1,1);
					\fill[] (2,5) rectangle ++(1,1);
					\fill[] (3,4) rectangle ++(1,1);
					\fill[] (4,1) rectangle ++(1,1);
					\fill[] (5,0) rectangle ++(1,1);
					\fill[] (6,6) rectangle ++(1,1);
					\fill[] (7,2) rectangle ++(1,1);
					\node[white] at (5.5,0.5) {\huge $ \mathbf{2} $};
					\node[white] at (0.5,1.5) {\huge $ \mathbf{2} $};
					\node[white] at (4.5,1.5) {\huge $ \mathbf{2} $};
					\node[white] at (2.5,2.5) {\huge $ \mathbf{2} $};
					\node[white] at (3.5,3.5) {\huge $ \mathbf{2} $};
					\node[white] at (3.5,4.5) {\huge $ \mathbf{2} $};
					\node[white] at (2.5,5.5) {\huge $ \mathbf{2} $};
					\node[white] at (1.5,6.5) {\huge $ \mathbf{2} $};
					\node[white] at (7.5,7.5) {\huge $ \mathbf{3} $};
					\draw[line width=3pt, green!50!black] (-1,2) -- ++(10,0);
					\draw[line width=3pt, green!50!black] (-1,4) -- ++(10,0);
					\draw[line width=3pt, green!50!black] (-1,6) -- ++(10,0);
					\node[green!50!black,right] at (9,1) {\Huge $ \mathbf{level\ 1} $};
					\node[green!50!black,right] at (9,3) {\Huge $ \mathbf{level\ 2} $};
					\node[green!50!black,right] at (9,5) {\Huge $ \mathbf{level\ 2} $};
					\node[green!50!black,right] at (9,7) {\Huge $ \mathbf{level\ 1} $};
				\end{tikzpicture}
			}
			\caption{Levels of black blocks.}
			\label{fig:bb-levels}
		\end{figure}
		
		Initially, no group is locked.
		We start with optimizing/shifting the level $ 1 $ black blocks and start with those located in the upper group.
		If there is exactly one of them in each column, then we have nothing to modify und consider this group as locked.
		Otherwise there is an overlapping or there is one column in which black blocks are at least in two different rows.
		In case of an overlapping, we move a black block from a field with multiple blocks to the same row in a column where currently no black block appears in the upper level $ 1 $ group.
		In case of a column with black blocks in different rows of the upper level $ 1 $ group, we move one of them to the same row in a column where currently no black block appears in the upper level $ 1 $ group.
		In order to keep the number of blocks per column constant at the value $ d $, we have to move back (i.e., from the column whose block number was just increased to the column whose block number was just reduced) a black block in another row.
		For doing this take into account that temporarely we created a column with more than $ d $ black blocks.
		Hence there must be another, no already locked group, with a black field in this column; take the highest such black block for moving back (not the just added one).
		This procedure cannot decrease the sum of distances due to the following fact: 
		Inside the column where we first added a block, in fact the highest block was moved up.
		Thus the distances from this highest block to each of the $ d-1 $ other blocks in this column increased by $ \delta $, the row difference between the new and old position of the highest block.
		All other distances in this column remain unchanged.
		Inside the other modified column each of the $ d-1 $ distances involving the modified block decrease at most by $ \delta $ (due to the triangle inequality), whereas the other ones remain unchanged. Thus the sum over all cannot decrease.
		We repeat the previous steps until there is exactly one black block in each column of the upper level $ 1 $ group (see Figure~\ref{fig:opt-upper-level-one}).
		Then we lock this group.
		\begin{figure}[h!]
			\centering
			\scalebox{0.4}{
				\begin{tikzpicture}
					\draw[] (0,0) rectangle (8,8);
					\draw[] (0,0) grid (8,8);
					\fill[] (0,5) rectangle ++(1,1);
					\fill[] (1,6) rectangle ++(1,1);
					\fill[] (2,2) rectangle ++(1,1);
					\fill[] (3,4) rectangle ++(1,1);
					\fill[] (4,1) rectangle ++(1,1);
					\fill[] (5,0) rectangle ++(1,1);
					\fill[] (6,3) rectangle ++(1,1);
					\fill[] (7,7) rectangle ++(1,1);
					\fill[] (0,1) rectangle ++(1,1);
					\fill[] (1,0) rectangle ++(1,1);
					\fill[] (2,5) rectangle ++(1,1);
					\fill[] (3,3) rectangle ++(1,1);
					\fill[] (4,6) rectangle ++(1,1);
					\fill[] (5,4) rectangle ++(1,1);
					\fill[] (6,2) rectangle ++(1,1);
					\fill[] (7,7) rectangle ++(1,1);
					\fill[] (0,1) rectangle ++(1,1);
					\fill[] (1,6) rectangle ++(1,1);
					\fill[] (2,2) rectangle ++(1,1);
					\fill[] (3,3) rectangle ++(1,1);
					\fill[] (4,0) rectangle ++(1,1);
					\fill[] (5,5) rectangle ++(1,1);
					\fill[] (6,4) rectangle ++(1,1);
					\fill[] (7,7) rectangle ++(1,1);
					\fill[] (0,7) rectangle ++(1,1);
					\fill[] (1,3) rectangle ++(1,1);
					\fill[] (2,5) rectangle ++(1,1);
					\fill[] (3,4) rectangle ++(1,1);
					\fill[] (4,1) rectangle ++(1,1);
					\fill[] (5,0) rectangle ++(1,1);
					\fill[] (6,6) rectangle ++(1,1);
					\fill[] (7,2) rectangle ++(1,1);
					\node[white] at (5.5,0.5) {\huge $ \mathbf{2} $};
					\node[white] at (0.5,1.5) {\huge $ \mathbf{2} $};
					\node[white] at (4.5,1.5) {\huge $ \mathbf{2} $};
					\node[white] at (2.5,2.5) {\huge $ \mathbf{2} $};
					\node[white] at (3.5,3.5) {\huge $ \mathbf{2} $};
					\node[white] at (3.5,4.5) {\huge $ \mathbf{2} $};
					\node[white] at (2.5,5.5) {\huge $ \mathbf{2} $};
					\node[white] at (1.5,6.5) {\huge $ \mathbf{2} $};
					\node[white] at (7.5,7.5) {\huge $ \mathbf{3} $};
					\draw[line width=3pt, green!50!black] (-1,2) -- ++(10,0);
					\draw[line width=3pt, green!50!black] (-1,4) -- ++(10,0);
					\draw[line width=3pt, green!50!black] (-1,6) -- ++(10,0);
					\node[green!50!black,right] at (9,1) {\Huge $ \mathbf{level\ 1} $};
					\node[green!50!black,right] at (9,3) {\Huge $ \mathbf{level\ 2} $};
					\node[green!50!black,right] at (9,5) {\Huge $ \mathbf{level\ 2} $};
					\node[green!50!black,right] at (9,7) {\Huge $ \mathbf{level\ 1} $};
					\draw[line width=3pt, blue] (7,7) rectangle ++(1,1);
					\draw[line width=3pt, blue] (3,7) rectangle ++(1,1);
					\draw[-stealth, line width=3pt, blue] (7.5,7.5) -- ++(-4,0);
					\draw[line width=3pt, blue] (3,4) rectangle ++(1,1);
					\draw[line width=3pt, blue] (7,4) rectangle ++(1,1);
					\draw[-stealth, line width=3pt, blue] (3.5,4.5) -- ++(4,0);
					\draw[line width=3pt, blue] (7,7) rectangle ++(1,1);
					\draw[line width=3pt, blue] (5,7) rectangle ++(1,1);
					\draw[-stealth, line width=3pt, blue] (7.5,7.5) -- ++(-2,0);
					\draw[line width=3pt, blue] (5,5) rectangle ++(1,1);
					\draw[line width=3pt, blue] (7,5) rectangle ++(1,1);
					\draw[-stealth, line width=3pt, blue] (5.5,5.5) -- ++(2,0);
					\draw[line width=3pt, blue] (1,6) rectangle ++(1,1);
					\draw[line width=3pt, blue] (2,6) rectangle ++(1,1);
					\draw[-stealth, line width=3pt, blue] (1.5,6.5) -- ++(1,0);
					\draw[line width=3pt, blue] (2,5) rectangle ++(1,1);
					\draw[line width=3pt, blue] (1,5) rectangle ++(1,1);
					\draw[-stealth, line width=3pt, blue] (2.5,5.5) -- ++(-1,0);
				\end{tikzpicture}
			}
			\caption{Optimizing the upper level $ 1 $ group.}
			\label{fig:opt-upper-level-one}
		\end{figure}
		
		We proceed with considering the level $ 1 $ black blocks in the lower group.
		If there is exactly one of them in each column, then we have nothing to modify and consider this group as locked.
		Otherwise there is again an overlapping or there is one column with black blocks in at least two different rows.
		In case of an overlapping, we move a black block from a field with multiple blocks to the same row in a column where currently no black block appears in the lower level $ 1 $ group.
		In case of a column with black blocks in different rows of the lower level $ 1 $ group we move one of them to the same row in a column where currently no black block appears in the lower level $ 1 $ group.
		In order to keep the number of blocks per column constant at the value $ d $, we again have to move back a black block to another row.
		As before, we temporarily created a column with more than $ d $ black blocks.
		Hence there must be another group, which is not already locked, with a black field in this column; take the lowest such black block for moving back (not the just added one).
		This procedure cannot decrease the sum of distances because of the following behaviour.
		Inside the column where we first added a block, in fact the lowest block was moved down.
		Thus the distances from this lowest block to each of the $ d-1 $ other blocks in this column increased by $ \delta $, the row difference between the new and old position of the lowest block.
		All other distances in this column remain unchanged.
		Inside the other modified column each of the $ d-1 $ distances involving the modified block decreases at most by $ \delta $ (triangle inequality), whereas the other ones remain unchanged as well.
		Thus the sum over all cannot decrease.
		We repeat the previous steps until there is exactly one black block in each column of the lower level $ 1 $ group (see Figure~\ref{fig:opt-lower-level-one}).
		Then we lock this group.
		\begin{figure}[h!]
			\centering
			\scalebox{0.4}{
				\begin{tikzpicture}
					\draw[] (0,0) rectangle (8,8);
					\draw[] (0,0) grid (8,8);
					\fill[] (0,5) rectangle ++(1,1);
					\fill[] (1,6) rectangle ++(1,1);
					\fill[] (2,2) rectangle ++(1,1);
					\fill[] (3,4) rectangle ++(1,1);
					\fill[] (4,1) rectangle ++(1,1);
					\fill[] (5,0) rectangle ++(1,1);
					\fill[] (6,3) rectangle ++(1,1);
					\fill[] (7,7) rectangle ++(1,1);
					\fill[] (0,1) rectangle ++(1,1);
					\fill[] (1,0) rectangle ++(1,1);
					\fill[] (2,5) rectangle ++(1,1);
					\fill[] (3,3) rectangle ++(1,1);
					\fill[] (4,6) rectangle ++(1,1);
					\fill[] (5,4) rectangle ++(1,1);
					\fill[] (6,2) rectangle ++(1,1);
					\fill[] (3,7) rectangle ++(1,1);
					\fill[] (0,1) rectangle ++(1,1);
					\fill[] (2,6) rectangle ++(1,1);
					\fill[] (2,2) rectangle ++(1,1);
					\fill[] (3,3) rectangle ++(1,1);
					\fill[] (4,0) rectangle ++(1,1);
					\fill[] (7,5) rectangle ++(1,1);
					\fill[] (6,4) rectangle ++(1,1);
					\fill[] (5,7) rectangle ++(1,1);
					\fill[] (0,7) rectangle ++(1,1);
					\fill[] (1,3) rectangle ++(1,1);
					\fill[] (1,5) rectangle ++(1,1);
					\fill[] (7,4) rectangle ++(1,1);
					\fill[] (4,1) rectangle ++(1,1);
					\fill[] (5,0) rectangle ++(1,1);
					\fill[] (6,6) rectangle ++(1,1);
					\fill[] (7,2) rectangle ++(1,1);
					\node[white] at (5.5,0.5) {\huge $ \mathbf{2} $};
					\node[white] at (0.5,1.5) {\huge $ \mathbf{2} $};
					\node[white] at (4.5,1.5) {\huge $ \mathbf{2} $};
					\node[white] at (2.5,2.5) {\huge $ \mathbf{2} $};
					\node[white] at (3.5,3.5) {\huge $ \mathbf{2} $};
					\draw[line width=3pt, green!50!black] (-1,2) -- ++(10,0);
					\draw[line width=3pt, green!50!black] (-1,4) -- ++(10,0);
					\draw[line width=3pt, green!50!black] (-1,6) -- ++(10,0);
					\node[green!50!black,right] at (9,1) {\Huge $ \mathbf{level\ 1} $};
					\node[green!50!black,right] at (9,3) {\Huge $ \mathbf{level\ 2} $};
					\node[green!50!black,right] at (9,5) {\Huge $ \mathbf{level\ 2} $};
					\node[green!50!black,right] at (9,7) {\Huge $ \mathbf{level\ 1\ \textcolor{gray}{(locked)}} $};
					\fill[gray,opacity=0.5] (-0.5,6) rectangle (8.5,8.5);
					\draw[line width=3pt, blue] (0,1) rectangle ++(1,1);
					\draw[line width=3pt, blue] (2,1) rectangle ++(1,1);
					\draw[-stealth, line width=3pt, blue] (0.5,1.5) -- ++(2,0);
					\draw[line width=3pt, blue] (2,2) rectangle ++(1,1);
					\draw[line width=3pt, blue] (0,2) rectangle ++(1,1);
					\draw[-stealth, line width=3pt, blue] (2.5,2.5) -- ++(-2,0);
					\draw[line width=3pt, blue] (4,1) rectangle ++(1,1);
					\draw[line width=3pt, blue] (3,1) rectangle ++(1,1);
					\draw[-stealth, line width=3pt, blue] (4.5,1.5) -- ++(-1,0);
					\draw[line width=3pt, blue] (3,3) rectangle ++(1,1);
					\draw[line width=3pt, blue] (4,3) rectangle ++(1,1);
					\draw[-stealth, line width=3pt, blue] (3.5,3.5) -- ++(1,0);
					\draw[line width=3pt, blue] (4,1) rectangle ++(1,1);
					\draw[line width=3pt, blue] (6,1) rectangle ++(1,1);
					\draw[-stealth, line width=3pt, blue] (4.5,1.5) -- ++(2,0);
					\draw[line width=3pt, blue] (6,2) rectangle ++(1,1);
					\draw[line width=3pt, blue] (4,2) rectangle ++(1,1);
					\draw[-stealth, line width=3pt, blue] (6.5,2.5) -- ++(-2,0);
					\draw[line width=3pt, blue] (5,0) rectangle ++(1,1);
					\draw[line width=3pt, blue] (7,0) rectangle ++(1,1);
					\draw[-stealth, line width=3pt, blue] (5.5,0.5) -- ++(2,0);
					\draw[line width=3pt, blue] (7,2) rectangle ++(1,1);
					\draw[line width=3pt, blue] (5,2) rectangle ++(1,1);
					\draw[-stealth, line width=3pt, blue] (7.5,2.5) -- ++(-2,0);
				\end{tikzpicture}
			}
			\caption{Optimizing the lower level $ 1 $ group.}
			\label{fig:opt-lower-level-one}
		\end{figure}
		
		Now all level $ 1 $ blocks are locked.
		Note the very important fact that for the current configuration the exact position of level $ 1 $ blocks is not relevant for moving blocks of higher level up and down (not entering the locked area) since the sum of the distance to the upper level $ 1 $ block and the distance to the lower level $ 1 $ block is the same for all possible block positions in between, i.e., for all higher level positions.
		Therefore, all level $ 1 $ blocks can be ignored in the optimizing of higher level blocks.
		We proceed level-wise and perform the same steps as before for a not locked group with the lowest level.
		In fact, it does not matter, whether we start with the upper or lower group of that concrete level (see Figure~\ref{fig:opt-next-level}).
		\begin{figure}[h!]
			\centering
			\scalebox{0.4}{
				\begin{tikzpicture}
					\draw[] (0,0) rectangle (8,8);
					\draw[] (0,0) grid (8,8);
					\fill[] (0,5) rectangle ++(1,1);
					\fill[] (1,6) rectangle ++(1,1);
					\fill[] (2,2) rectangle ++(1,1);
					\fill[] (3,4) rectangle ++(1,1);
					\fill[] (6,1) rectangle ++(1,1);
					\fill[] (5,0) rectangle ++(1,1);
					\fill[] (6,3) rectangle ++(1,1);
					\fill[] (7,7) rectangle ++(1,1);
					\fill[] (0,1) rectangle ++(1,1);
					\fill[] (1,0) rectangle ++(1,1);
					\fill[] (2,5) rectangle ++(1,1);
					\fill[] (3,3) rectangle ++(1,1);
					\fill[] (4,6) rectangle ++(1,1);
					\fill[] (5,4) rectangle ++(1,1);
					\fill[] (4,2) rectangle ++(1,1);
					\fill[] (3,7) rectangle ++(1,1);
					\fill[] (2,1) rectangle ++(1,1);
					\fill[] (2,6) rectangle ++(1,1);
					\fill[] (0,2) rectangle ++(1,1);
					\fill[] (4,3) rectangle ++(1,1);
					\fill[] (4,0) rectangle ++(1,1);
					\fill[] (7,5) rectangle ++(1,1);
					\fill[] (6,4) rectangle ++(1,1);
					\fill[] (5,7) rectangle ++(1,1);
					\fill[] (0,7) rectangle ++(1,1);
					\fill[] (1,3) rectangle ++(1,1);
					\fill[] (1,5) rectangle ++(1,1);
					\fill[] (7,4) rectangle ++(1,1);
					\fill[] (3,1) rectangle ++(1,1);
					\fill[] (7,0) rectangle ++(1,1);
					\fill[] (6,6) rectangle ++(1,1);
					\fill[] (5,2) rectangle ++(1,1);
					\draw[line width=3pt, green!50!black] (-1,2) -- ++(10,0);
					\draw[line width=3pt, green!50!black] (-1,4) -- ++(10,0);
					\draw[line width=3pt, green!50!black] (-1,6) -- ++(10,0);
					\node[green!50!black,right] at (9,1) {\Huge $ \mathbf{level\ 1\ \textcolor{gray}{(locked)}} $};
					\node[green!50!black,right] at (9,3) {\Huge $ \mathbf{level\ 2} $};
					\node[green!50!black,right] at (9,5) {\Huge $ \mathbf{level\ 2} $};
					\node[green!50!black,right] at (9,7) {\Huge $ \mathbf{level\ 1\ \textcolor{gray}{(locked)}} $};
					\fill[gray,opacity=0.5] (-0.5,6) rectangle (8.5,8.5);
					\fill[gray,opacity=0.5] (-0.5,-0.5) rectangle (8.5,2);
					\draw[line width=3pt, blue] (7,4) rectangle ++(1,1);
					\draw[line width=3pt, blue] (4,4) rectangle ++(1,1);
					\draw[-stealth, line width=3pt, blue] (7.5,4.5) -- ++(-3,0);
					\draw[line width=3pt, blue] (4,3) rectangle ++(1,1);
					\draw[line width=3pt, blue] (7,3) rectangle ++(1,1);
					\draw[-stealth, line width=3pt, blue] (4.5,3.5) -- ++(3,0);
				\end{tikzpicture}
			}
			\caption{Optimizing the next level.}
			\label{fig:opt-next-level}
		\end{figure}
		
		For even dimension $ d $ the algorithm stops with locking the two groups of highest level.
		For odd dimension $ d $ the algorithm stops with locking the single group of highest level.
		Note that if there is only one unlocked group remaining, then this group automatically satisfies that each column contains exactly one black block.
		In fact, each locked group has exactly one black block per column and at the moment, where a group gets locked, each column contains exactly $ d $ black blocks.
		
		After finitely many steps, all groups are locked.
		We now ignore the previous levels and enumerate the $ d $ groups top-down (see Figure~\ref{fig:opt-groups}).
		\begin{figure}[h!]
			\centering
			\scalebox{0.4}{
				\begin{tikzpicture}
					\draw[] (0,0) rectangle (8,8);
					\draw[] (0,0) grid (8,8);
					\fill[] (0,5) rectangle ++(1,1);
					\fill[] (1,6) rectangle ++(1,1);
					\fill[] (2,2) rectangle ++(1,1);
					\fill[] (3,4) rectangle ++(1,1);
					\fill[] (6,1) rectangle ++(1,1);
					\fill[] (5,0) rectangle ++(1,1);
					\fill[] (6,3) rectangle ++(1,1);
					\fill[] (7,7) rectangle ++(1,1);
					\fill[] (0,1) rectangle ++(1,1);
					\fill[] (1,0) rectangle ++(1,1);
					\fill[] (2,5) rectangle ++(1,1);
					\fill[] (3,3) rectangle ++(1,1);
					\fill[] (4,6) rectangle ++(1,1);
					\fill[] (5,4) rectangle ++(1,1);
					\fill[] (4,2) rectangle ++(1,1);
					\fill[] (3,7) rectangle ++(1,1);
					\fill[] (2,1) rectangle ++(1,1);
					\fill[] (2,6) rectangle ++(1,1);
					\fill[] (0,2) rectangle ++(1,1);
					\fill[] (7,3) rectangle ++(1,1);
					\fill[] (4,0) rectangle ++(1,1);
					\fill[] (7,5) rectangle ++(1,1);
					\fill[] (6,4) rectangle ++(1,1);
					\fill[] (5,7) rectangle ++(1,1);
					\fill[] (0,7) rectangle ++(1,1);
					\fill[] (1,3) rectangle ++(1,1);
					\fill[] (1,5) rectangle ++(1,1);
					\fill[] (4,4) rectangle ++(1,1);
					\fill[] (3,1) rectangle ++(1,1);
					\fill[] (7,0) rectangle ++(1,1);
					\fill[] (6,6) rectangle ++(1,1);
					\fill[] (5,2) rectangle ++(1,1);
					\draw[line width=3pt, green!50!black] (-1,2) -- ++(10,0);
					\draw[line width=3pt, green!50!black] (-1,4) -- ++(10,0);
					\draw[line width=3pt, green!50!black] (-1,6) -- ++(10,0);
					\node[green!50!black,right] at (9,1) {\Huge $ \mathbf{group\ 4} $};
					\node[green!50!black,right] at (9,3) {\Huge $ \mathbf{group\ 3} $};
					\node[green!50!black,right] at (9,5) {\Huge $ \mathbf{group\ 2} $};
					\node[green!50!black,right] at (9,7) {\Huge $ \mathbf{group\ 1} $};
				\end{tikzpicture}
			}
			\caption{Groups with exactly one black block per column.}
			\label{fig:opt-groups}
		\end{figure}
		
		The previous steps yield a constellation, in which each group contains exactly one black block per column.
		Hence, if inside a group two columns are exchanged, then the sum of distances within one of the columns increases by the same amount as the sum of distances in the other column decreases.
		In other words: reordering the columns group-wisely does not change the total sum of differences.
		This allows us to sort the columns in each group such that they form $ d $ strictly increasing `lines' of length $ \ell $ (see Figure~\ref{fig:reordered-cols}).
		\begin{figure}[h!]
			\centering
			\scalebox{0.4}{
				\begin{tikzpicture}
					\draw[] (0,0) rectangle (8,8);
					\draw[] (0,0) grid (8,8);
					\fill[] (0,0) rectangle ++(1,1);
					\fill[] (2,0) rectangle ++(1,1);
					\fill[] (4,0) rectangle ++(1,1);
					\fill[] (6,0) rectangle ++(1,1);
					\fill[] (1,1) rectangle ++(1,1);
					\fill[] (3,1) rectangle ++(1,1);
					\fill[] (5,1) rectangle ++(1,1);
					\fill[] (7,1) rectangle ++(1,1);
					\fill[] (0,2) rectangle ++(1,1);
					\fill[] (2,2) rectangle ++(1,1);
					\fill[] (4,2) rectangle ++(1,1);
					\fill[] (6,2) rectangle ++(1,1);
					\fill[] (1,3) rectangle ++(1,1);
					\fill[] (3,3) rectangle ++(1,1);
					\fill[] (5,3) rectangle ++(1,1);
					\fill[] (7,3) rectangle ++(1,1);
					\fill[] (0,4) rectangle ++(1,1);
					\fill[] (2,4) rectangle ++(1,1);
					\fill[] (4,4) rectangle ++(1,1);
					\fill[] (6,4) rectangle ++(1,1);
					\fill[] (1,5) rectangle ++(1,1);
					\fill[] (3,5) rectangle ++(1,1);
					\fill[] (5,5) rectangle ++(1,1);
					\fill[] (7,5) rectangle ++(1,1);
					\fill[] (0,6) rectangle ++(1,1);
					\fill[] (2,6) rectangle ++(1,1);
					\fill[] (4,6) rectangle ++(1,1);
					\fill[] (6,6) rectangle ++(1,1);
					\fill[] (1,7) rectangle ++(1,1);
					\fill[] (3,7) rectangle ++(1,1);
					\fill[] (5,7) rectangle ++(1,1);
					\fill[] (7,7) rectangle ++(1,1);
					\draw[line width=3pt, green!50!black] (-1,2) -- ++(10,0);
					\draw[line width=3pt, green!50!black] (-1,4) -- ++(10,0);
					\draw[line width=3pt, green!50!black] (-1,6) -- ++(10,0);
					\node[green!50!black,right] at (9,1) {\Huge $ \mathbf{group\ 4} $};
					\node[green!50!black,right] at (9,3) {\Huge $ \mathbf{group\ 3} $};
					\node[green!50!black,right] at (9,5) {\Huge $ \mathbf{group\ 2} $};
					\node[green!50!black,right] at (9,7) {\Huge $ \mathbf{group\ 1} $};
				\end{tikzpicture}
			}
			\caption{Groups with reordered columns.}
			\label{fig:reordered-cols}
		\end{figure}
		
		\textbf{(ii)}
		The resulting pattern is obviously induced by the shuffles defined in equation \eqref{eq:equidist-shuffle} describing equidistant shifting.
		Altogether we get
		\begin{equation*}
			\max_{\vartheta \in \mathcal{P}_\textrm{ds}([0,1]^d)} \int_{[0,1]^d} f_{\sum} \, \dd\vartheta = \int_{[0,1]} f_{\sum}(x_1, h_2(x_1), \ldots, h_d(x_1)) \, \dd\lambda(x_1)
		\end{equation*}
		and it remains to compute the latter integral.
		A straightforward calculation yields
		\begin{align*}
			\int_{[0,1]} f_{\sum}(x_1, h_2(x_1), \ldots, h_d(x_1)) \, \dd\lambda(x_1) &= f_{\sum}\left( 0, \frac{1}{d}, \frac{2}{d}, \ldots, \frac{d-1}{d} \right) \\
			&= 2 \sum_{j=1}^{d-1} \frac{j}{d} \cdot (d-j) = \frac{2}{d} \sum_{j=1}^{d-1} \left( jd - j^2 \right) \\
			&= \frac{2}{d} \left( d \cdot \frac{(d-1)d}{2} - \frac{(d-1)d(2d-1)}{6} \right) \\
			&= (d-1)d - \frac{(d-1)(2d-1)}{3} = \frac{d^2-1}{3},
		\end{align*}
		and the proof is complete. 
	\end{proof}

	\section{Solving the original maximization problems}
	\label{sec:back-orig-prob}
	
	Having determined the upper bound $ C_f^d $ for each of the three functions $ f_{\min} $, $ f_{\max} $ and $ f_{\sum} $, we now return to the original problem of calculating
	\begin{equation*}
		B_f^d = \sup_{(x_n)_{n \in \NN} \in \mathcal{U}} \limsup\limits_{n \to \infty} \frac{1}{n} \sum_{i=1}^{n} f(x_i,x_{i+1},\ldots,x_{i+d-1}).
	\end{equation*}
	In one of the previous sections we have already proved that $ B_f^d \leq C_f^d $ holds for all
	three functions of interest.
	We now show (the somewhat surprising fact) that $ B_f^d $ and $ C_f^d $ coincide for all three cases.
	For $ f_{\max} $ and $ f_{\sum} $ we will construct a u.f.d.s.\ attaining the upper bounds, while for $ f_{\min} $ we show the existence of u.f.d.s.\ coming arbitrarily close to the upper bound (implying that the upper bound is best possible).
	 
	\begin{proposition}
		\label{prop:fmax-fsum}
		For every $ d \geq 2 $ there exists a u.f.d.s.\ $ (z_n)_{n \in \NN} \in \mathcal{U} $ with 
		\begin{equation*}
			\lim\limits_{n \to \infty} \frac{1}{n} \sum_{i=1}^{n} f_{\max}(z_i,z_{i+1},\ldots,z_{i+d-1}) = \frac{d-1}{d} = C_{f_{\max}}^d
		\end{equation*}
		and
		\begin{equation*}
			\lim\limits_{n \to \infty} \frac{1}{n} \sum_{i=1}^{n} f_{\sum}(z_i,z_{i+1},\ldots,z_{i+d-1}) = \frac{d^2-1}{3} = C_{f_{\sum}}^d.
		\end{equation*}
	\end{proposition}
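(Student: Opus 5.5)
The plan is to realize the extremal shuffle $\vartheta^*$ from step (ii) of the proof of Theorem~\ref{thm:cop-fmin-fmax} along the sequence itself, i.e.\ to build $(z_n)$ so that (almost) every window of $d$ consecutive points is close to an equidistant configuration $\{t, t+\frac1d, \ldots, t+\frac{d-1}{d}\} \bmod 1$. Two facts about this configuration make it the right target. First, $f_{\max}$ and $f_{\sum}$ are symmetric in their arguments. Second, for every $t$ the reduced set $\{t+\frac jd - \lfloor t+\frac jd\rfloor\}$ is $\{t', t'+\frac1d,\ldots,t'+\frac{d-1}{d}\}$ with $t'\in[0,\frac1d)$. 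Consequently $f_{\max}$ equals $\frac{d-1}{d}$ on it and $f_{\sum}$ equals $\frac{d^2-1}{3}$, the value computed at the end of the proof of Theorem~\ref{thm:cop-fsum}, regardless of $t$ and of the order of the points.

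\textbf{Construction.} Fix a u.f.d.s.\ $(y_k)_{k\in\NN}$ which varies slowly, i.e.\ $|y_{k+1}-y_k|\to 0$. A concrete choice is $y_k=\sqrt{k}-\lfloor\sqrt{k}\rfloor$, which is known to be uniformly distributed and whose successive differences tend to $0$ in the circle metric $\|\cdot\|$. Write every index as $n=(k-1)d+r$ with $r\in[d]$, and define
\[
z_{(k-1)d+r} \coloneqq y_k+\tfrac{r-1}{d}-\left\lfloor y_k+\tfrac{r-1}{d}\right\rfloor .
\]
Uniform distribution of $(z_n)$ is routine. For fixed $r$, the subsequence $(z_{(k-1)d+r})_k$ is a rotation of a u.f.d.s.\ and is therefore a u.f.d.s. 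Averaging over the $d$ residue classes, and handling the at most $d-1$ leftover terms of an incomplete block (they contribute $O(1/n)$), gives the claim.

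\textbf{Window analysis.} A window starting at $(k-1)d+s$ with $s=1$ is exactly an equidistant configuration, so it contributes the optimal values. For $s\geq 2$ the window consists of $y_k+\frac{j}{d}$ for $j=s-1,\ldots,d-1$ together with $y_{k+1}+\frac{j}{d}$ for $j=0,\ldots,s-2$, all taken mod $1$. If we replace $y_{k+1}$ by $y_k$ in the second part, we obtain an equidistant configuration. Each coordinate then moves by at most $\delta_k\coloneqq\|y_{k+1}-y_k\|$ on the real line, unless the wrap-around mod $1$ separates the two values. This can only happen if some point $y_k+\frac jd$ lies within $\delta_k$ of an integer, i.e.\ if $y_k$ lies within $\delta_k$ of $\frac1d\mathbb Z$.

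We call $k$ \emph{good} if $\operatorname{dist}(y_k,\frac1d\mathbb Z)>\varepsilon$ and $\delta_k<\varepsilon$. On good blocks we use that $f_{\max}$ is $2$-Lipschitz and $f_{\sum}$ is $2(d-1)$-Lipschitz per coordinate with respect to the sup norm. Hence every window starting in a good block differs from the optimal value by at most $c_d\,\delta_k$. The density of bad $k$ is at most $2d\varepsilon+o(1)$, by uniform distribution of $(y_k)$ and $\delta_k\to0$. Bad windows contribute at most $\|f\|_\infty$ each.

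\textbf{Conclusion.} Combining these estimates, both Cesàro averages in the statement have $\liminf$ and $\limsup$ within $O(\varepsilon)$ of the target values $\frac{d-1}{d}$ and $\frac{d^2-1}{3}$, where we also use that $\frac1K\sum_{k\le K}\delta_k\to0$. Letting $\varepsilon\to0$ gives the two limits. Comparison with Theorems~\ref{thm:cop-fmin-fmax} and \ref{thm:cop-fsum} then identifies these values with $C^d_{f_{\max}}$ and $C^d_{f_{\sum}}$.

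The main obstacle is the boundary issue in the window analysis. Closeness of $y_{k+1}$ to $y_k$ on the circle does not give closeness of the reduced values on $[0,1]$, so the wrap-around set has to be controlled. I handle it by the density argument with the $\varepsilon$-neighbourhood of $\frac1d\mathbb Z$ described above. A further, minor point is to make sure the slowly varying sequence chosen is genuinely uniformly distributed, which for $\sqrt{k}$ is classical.
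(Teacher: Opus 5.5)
Your proof is correct, but it takes a different route from the paper's.

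\textbf{The paper's route.} The paper never reduces mod $1$. It takes an \emph{arbitrary} u.f.d.s.\ $(x_n)_{n\in\NN}$, compresses it to $y_n=\frac{x_n}{d}\in[0,\frac1d]$, and sets $z_{kd+r}=y_{k+1}+\frac{r-1}{d}$. Each block then occupies the $d$ subintervals $[\frac{r-1}{d},\frac rd]$ in increasing order. So the relative order of the points in any window is completely determined, since every cross difference $\frac{a-b}{d}+y_{k+1}-y_{k+2}$ with $a>b$ is nonnegative. Consequently $f_{\max}$ and $f_{\sum}$ of a window equal the optimal value \emph{exactly}, plus $C(d,r)\,(y_{k+1}-y_{k+2})$ with a coefficient depending only on $d$ and $r$. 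These deviations telescope, and the error is $O(1/n)$ for every seed sequence.

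\textbf{Your route.} You realize the extremal shuffle literally via rotations mod $1$. This forces you to:
\begin{enumerate}[i)]
\item choose a special slowly varying seed such as $\{\sqrt{k}\}$ (Fej\'er's theorem);
\item control the wrap-around set by an $\varepsilon$-density argument;
\item accept an $o(1)$ error without a rate.
\end{enumerate}

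\textbf{What your route gains.} Your argument never uses linearity. It needs only that $f$ is continuous, permutation invariant, and constant on equidistant configurations. Hence it applies verbatim to $f_{\min}$, which is $2$-Lipschitz in the sup norm and equals $\frac1d$ on every equidistant configuration. Your single sequence $(z_n)_{n\in\NN}$ therefore already satisfies $\lim_{n\to\infty}\frac1n\sum_{i=1}^n f_{\min}(z_i,\ldots,z_{i+d-1})=\frac1d$, so the supremum $B^d_{f_{\min}}$ is attained.

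This is stronger than Proposition~\ref{prop:fmin}. In the paper's construction the $f_{\min}$-deviation of a window is a positive part $\max(0,y_{k+2}-y_{k+1})$ rather than a linear term, so it does not telescope. For that reason the paper only reaches $\frac1d$ in the limit $L\to\infty$ of a family of sequences.
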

	\begin{proof}
		Suppose that $ (x_n)_{n \in \NN} \in \mathcal{U} $ is a u.f.d.s..
		Then the sequence $ (y_n)_{n \in \NN} $ defined by $ y_n \coloneqq \frac{x_n}{d} $ obviously is uniformly distributed on the interval $ [0,\frac{1}{d}] $.
		Based on $ (y_n)_{n \in \NN} $ we construct a u.f.d.s.\ $ (z_n)_{n \in \NN} $ as follows.
		Decomposing the index as $ i = kd + r $ with a non-negative integer $ k $ and $ 0 < r \leq d $, set
		\begin{equation}
			\label{eq:zn}
			z_i \coloneqq y_{k+1} + \frac{r-1}{d}.
		\end{equation}
		In other words, $ (z_n)_{n \in \NN} $ has the form
		\begin{align*}
			\left(\vphantom{\frac{d-1}{d}}\right.&y_1,\ y_1 + \frac{1}{d},\ y_1 + \frac{2}{d},\ \ldots,\ y_1 + \frac{d-1}{d},\\
			&y_2,\ y_2 + \frac{1}{d},\ y_2 + \frac{2}{d},\ \ldots,\ y_2 + \frac{d-1}{d},\\
			&y_3,\ \qquad \ldots \qquad \left.\vphantom{\frac{d-1}{d}}\right).
		\end{align*}
		It is straightforward to verify that $ (z_n)_{n \in \NN} $ is a u.f.d.s..
		
		(i)
		We first consider $ f_{\max} $.
		Reusing the index decomposition $ i = kd + r $ from above directly yields
		\begin{equation*}
			f_{\max}(z_i, \ldots, z_{i+d-1}) =
			\begin{cases}
				\frac{d-1}{d} & \text{for } r = 1 \\
				\frac{d-1}{d} + y_{k+1} - y_{k+2} & \text{for } r \geq 2.
			\end{cases}
		\end{equation*}
		Therefore it follows that
		\begin{multline*}
			\frac{1}{n} \sum_{i=1}^{n} f_{\max}(z_i, \ldots, z_{i+d-1}) = \frac{d-1}{d} + \frac{1}{n} \cdot (d-1) \sum_{k=1}^{\left\lfloor \frac{n}{d} \right\rfloor} (y_k - y_{k+1}) \\
			+ \frac{1}{n} \max\setb{0, n - d \left\lfloor \frac{n}{d} \right\rfloor - 1} \cdot \left( y_{\left\lfloor \frac{n}{d} \right\rfloor + 1} - y_{\left\lfloor \frac{n}{d} \right\rfloor + 2} \right)
		\end{multline*}
		which, using of the telescope structure of the sum, yields
		\begin{equation*}
			\abs{\frac{1}{n} \sum_{i=1}^{n} f_{\max}(z_i, \ldots, z_{i+d-1}) - \frac{d-1}{d}} \leq \frac{1}{n} \cdot 2 \cdot (d-1) \cdot \frac{1}{d} \quad \overset{n \to \infty}{\longrightarrow} \quad 0.
		\end{equation*}
		Hence we have found a u.f.d.s.\ $ (z_n)_{n \in \NN} $ satisfying 
		\begin{equation*}
			\lim\limits_{n \to \infty} \frac{1}{n} \sum_{i=1}^{n} f_{\max}(z_i, \ldots, z_{i+d-1}) = \frac{d-1}{d},
		\end{equation*}
		proving the first assertion of our proposition.
		
		(ii)
		We now turn towards $ f_{\sum} $.
		Using the u.f.d.s.\ $ (z_n)_{n \in \NN} $ from the previous step, we get
		\begin{equation*}
			f_{\sum}(z_i, \ldots, z_{i+d-1}) = \frac{d^2-1}{3} + C(d,r) \left( y_{k+1} - y_{k+2} \right)
		\end{equation*}
		where once again the index decomposition $ i = kd + r $ with a non-negative integer $ k $ and $ 0 < r \leq d $ is used; thereby $ C(d,r) $ denotes a constant only depending on the dimension $ d $ and the `sequence column' parameter $ r $.
		In fact, $ C(d,r) $ counts, how often the difference $ y_{k+1} - y_{k+2} $ has to be considered.
		Making use of the appearing telescope structure yields
		\begin{equation*}
			\abs{\frac{1}{n} \sum_{i=1}^{n} f_{\sum}(z_i, \ldots, z_{i+d-1}) - \frac{d^2-1}{3}} \leq \frac{1}{n} \cdot \left( \sum_{j=1}^{d} C(d,j) \right) \cdot 2 \sup_{k \in \NN} y_k \quad \overset{n \to \infty}{\longrightarrow} \quad 0.
		\end{equation*}
		Thus, the u.f.d.s.\ $ (z_n)_{n \in \NN} $ also satisfies
		\begin{equation*}
			\lim\limits_{n \to \infty} \frac{1}{n} \sum_{i=1}^{n} f_{\sum}(z_i, \ldots, z_{i+d-1}) = \frac{d^2-1}{3},
		\end{equation*}
		which concludes the proof.
	\end{proof}
	
	Finally, we revisit the minimum distance function $ f_{\min} $.
	Here the procedure performed for the other two functions seems not work in a comparably elegant way, since we do not have sufficient control over what values the difference $ y_k - y_{k+1} $ can attain (in particular, they are strongly restricted by the uniform distribution property).
	It is more likely that we could stay away from the extrem bound in this case as the proof of the bound has shown that maximal minimum distance requires maximal maximum distance which makes it even more restrictive.
	Furthermore, for $ f_{\max} $ and $ f_{\sum} $ distance values larger and smaller than the average are having an effect (cf.\ the used telescope structure), whereas in the case of $ f_{\min} $ it is likely that only smaller ones will contribute.
	We will prove the following result for $ d \geq 3 $ (which is no restriction, since for $ d = 2 $ all three functions are identical up to the constant $ 2 $).
	
	\begin{proposition}
		\label{prop:fmin}
		For every $ d \geq 3 $ there exists some u.f.d.s.\ $ \left((z_n^L)_{n \in \NN}\right)_{L \in \NN} \subseteq \mathcal{U} $ such that we have 
		\begin{equation*}
			\lim\limits_{L \to \infty} \limsup\limits_{n \to \infty} \frac{1}{n} \sum_{i=1}^{n} f_{\min}(z_i,z_{i+1},\ldots,z_{i+d-1}) = \frac{1}{d} = C_{f_{\min}}^d.
		\end{equation*}
	\end{proposition}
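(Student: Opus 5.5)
The plan is to reuse the block construction from Proposition~\ref{prop:fmax-fsum}, i.e.\ $z_i = y_{k+1} + \frac{r-1}{d}$ for $i = kd+r$ with $0<r\leq d$, where $(y_k)$ is uniformly distributed on $[0,\frac1d]$. The difference is that $(y_k)$ will now be chosen depending on a parameter $L$, so that its consecutive \emph{decreases} are uniformly small. The upper bound $\limsup_n \frac1n\sum_{i=1}^n f_{\min}(z_i,\ldots,z_{i+d-1}) \le B^d_{f_{\min}} \le C^d_{f_{\min}} = \frac1d$ is already available from Theorem~\ref{thm:cop-fmin-fmax} together with $B_f^d\le C_f^d$. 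It therefore suffices to prove
\begin{equation*}
\liminf_{n\to\infty}\frac1n\sum_{i=1}^n f_{\min}(z^L_i,\ldots,z^L_{i+d-1}) \ge \frac1d - \frac{c_d}{L}
\end{equation*}
for some constant $c_d$ independent of $L$.

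\textbf{Step 1 (local computation).} Fix $i=kd+r$ and write $\Delta \coloneqq y_{k+1}-y_{k+2}\in[-\frac1d,\frac1d]$.
\begin{itemize}
\item For $r=1$ the window is $y_{k+1}+\frac{j}{d}$, $j=0,\ldots,d-1$, so $f_{\min}=\frac1d$.
\item For $r\ge 2$ the window consists of $y_{k+1}+\frac td$ with $t\ge r-1$ and $y_{k+2}+\frac sd$ with $s\le r-2$.
\item Pairs from the same group differ by multiples of $\frac1d$.
\item Cross pairs differ by $\frac{t-s}{d}+\Delta$ with $1\le t-s\le d-1$, so their absolute value is at least $\frac1d - \Delta^-$, where $\Delta^- = \max(0,-\Delta)$. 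Indeed, if $\Delta \ge 0$ all cross differences are $\ge \frac1d$, and if $\Delta<0$ the smallest one is $\frac1d+\Delta$.
\end{itemize}
Hence $f_{\min}(z_i,\ldots,z_{i+d-1}) \ge \frac1d - (y_{k+2}-y_{k+1})^+$. Averaging over $i\le n$ gives
\begin{equation*}
\frac1n\sum_{i=1}^n f_{\min} \ge \frac1d - \frac{d-1}{n}\sum_{k\le n/d+1}(y_{k+2}-y_{k+1})^+ .
\end{equation*}
So the task reduces to making the average \emph{increase} of $(y_k)$ small. Since $(y_k)$ is bounded, the increases and decreases must balance on average, so the increases should consist of many tiny steps while the compensating drops are large (drops cost nothing).

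\textbf{Step 2 (construction of $y^L$).} Take any u.f.d.s.\ $(x_m)$ and, for $m\ge 0$ and $j=0,\ldots,L-1$, set
\begin{equation*}
y^L_{mL+j+1} \coloneqq \frac{j + x_{m+1}}{dL}.
\end{equation*}
Each cycle of length $L$ then increases in steps of exactly $\frac{1}{dL}$, and between cycles the sequence jumps down, which contributes nothing. Thus $(y_{k+2}-y_{k+1})^+\le \frac1{dL}$ for all $k$, and plugging this into Step~1 yields the liminf bound with $c_d = \frac{d-1}{d}$. Letting $L\to\infty$ then gives the claimed limit $\frac1d$.

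\textbf{Step 3 (uniform distribution), the main bookkeeping point.} It remains to check that $(y^L_k)_k$ is uniformly distributed on $[0,\frac1d]$, and hence, as in Proposition~\ref{prop:fmax-fsum}, that $(z^L_n)_n\in\mathcal U$.
\begin{itemize}
\item For fixed $j$, the subsequence $\big(\frac{j+x_{m+1}}{dL}\big)_m$ is uniformly distributed on the $j$-th subinterval of length $\frac1{dL}$.
\item The $L$ interleaved subsequences share the counting evenly up to an error of at most $L$ terms.
\item Therefore the counting function for any interval $[a,b]$ is asymptotically $dL\cdot\frac{b-a}{dL}\cdot\frac{\cdot}{\,}$ summed over the subintervals, i.e.\ proportional to $d(b-a)$.
\end{itemize}
The uniform distribution of $(z_n)$ then follows exactly as before, because each residue class $r$ contributes a translate of $(y_k)$ onto $[\frac{r-1}{d},\frac rd]$. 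I expect no real obstacle here beyond carefully handling boundary indices (the incomplete last block and the incomplete last cycle), which contribute $O(L/n)$ and vanish as $n\to\infty$.
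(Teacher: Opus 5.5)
Your proof is correct. It shares the paper's architecture: you reuse the $z$-construction of Proposition~\ref{prop:fmax-fsum} and build $(y_k)$ from $L$ interleaved scaled copies of a u.f.d.s.\ cycling through the $L$ subintervals of $[0,\frac1d]$. The genuine difference is the orientation of the cycles.

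The paper runs through the subintervals in \emph{decreasing} order. Inside a cycle this gives $y_{k+1}-y_{k+2}=\frac{1}{dL}>0$, so $f_{\min}=\frac1d$ exactly there. The entire loss sits at the upward jump between cycles: the $d-1$ windows straddling it lose almost $\frac1d$ each, and this happens once every $L$ blocks. You run through the subintervals in increasing order instead, so every block transition costs exactly $\frac1{dL}$ in $d-1$ windows and the drops are free. Both versions give an asymptotic deficit of at most $\frac{d-1}{d^2L}$. Your $c_d=\frac{d-1}{d}$ is therefore valid but loose, since counting $k\le n/d$ contributes a further factor $\frac1d$. In fact you even have $f_{\min}\ge\frac1d-\frac1{dL}$ for every single window.

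The coincidence of the two deficits is explained by your Step~1 inequality, which for $d\ge 3$ and $r\ge 2$ is an equality. The average deficit is $\frac{d-1}{d}$ times the average positive variation of $(y_k)$, and both orientations have average positive variation about $\frac1{dL}$. So your heuristic that increases should be tiny and drops large is not what does the work; only small average upward variation matters, as the paper's mirror-image choice shows.

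What your formulation buys over the paper's case-by-case computation of $s(i)$ is generality. Any sequence uniformly distributed on $[0,\frac1d]$ with vanishing average positive variation works with no limit in $L$. For example, take $y_k=\frac1d\{\sqrt{k}\}$: it is uniformly distributed by Fejér's theorem, and $\sum_{k\le K}(y_{k+1}-y_k)^+\le\frac{\sqrt{K+1}}{d}$. This yields a single u.f.d.s.\ with $\lim_n\frac1n\sum_{i\le n}f_{\min}(z_i,\ldots,z_{i+d-1})=\frac1d$. So $B^d_{f_{\min}}=\frac1d$ is actually attained, not merely approached, contrary to the paper's speculation.

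One small repair: the counting expression in Step~3 is garbled. What you need is that, among the first $N$ terms, the count in $[a,b]$ equals $\sum_{j}\frac NL\cdot dL\,\lambda([a,b]\cap J_j)+o(N)=dN(b-a)+o(N)$, where $J_j=[\frac j{dL},\frac{j+1}{dL}]$.
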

	\begin{proof}
		We reuse the construction of the sequence $ (z_n)_{n \in \NN} $ from the proof of Proposition~\ref{prop:fmax-fsum}, but now do not start with an arbitrary uniformly distributed sequence $ (x_n)_{n \in \NN} \in \mathcal{U} $.
		Allowing for more control, we restrict to sequences $ (x_n)_{n \in \NN} \in \mathcal{U} $ with special properties and construct $ (x_n)_{n \in \NN} $ as follows:
		Assume that a $ (w_n)_{n \in \NN} $ is a u.f.d.s.\ and let $ L\geq 2 $ be an arbitrary, positive integer.
		Then obviously $ (\frac{w_n}{L})_{n \in \NN} $ is uniformly distributed on $ [0,\frac{1}{L}] $.
		As in the construction of $ (z_n)_{n \in \NN} $ before, we split the unit interval into subintervals of the same length.
		This time we use $ L $ intervals of length $ \frac{1}{L} $ each and run through these subintervalls in reverse order, i.e., we define $ (x_n)_{n \in \NN} $ by
		\begin{align*}
			\left(\vphantom{\frac{L-1}{L}}\right.&\frac{w_1}{L} + \frac{L-1}{L},\ \frac{w_1}{L} + \frac{L-2}{L},\ \frac{w_1}{L} + \frac{L-3}{L},\ \ldots,\ \frac{w_1}{L},\\
			&\frac{w_2}{L} + \frac{L-1}{L},\ \frac{w_2}{L} + \frac{L-2}{L},\ \frac{w_2}{L} + \frac{L-3}{L},\ \ldots,\ \frac{w_2}{L},\\
			&\frac{w_3}{L} + \frac{L-1}{L},\ \qquad \ldots \qquad \left.\vphantom{\frac{L-1}{L}}\right).
		\end{align*}
		Based on this sequence $ (x_n)_{n \in \NN} $ now define $ (y_n)_{n \in \NN} $ and $ (z_n)_{n \in \NN} $ as in the proof of Proposition~\ref{prop:fmax-fsum}.
		Again decomposing the index via $ i = kd + r $ with a non-negative integer $ k $ and $ 0 < r \leq d $, we get
		\begin{equation*}
			f_{\min}(z_i, \ldots, z_{i+d-1}) = \frac{1}{d} - s(i)
		\end{equation*}
		where $ s(i) = 0 $ in the cases $ r = 1 $ as well as $ r \geq 2 \land L \nmid k $ and $ s(i) \in [\frac{1}{d} - \frac{2}{dL}, \frac{1}{d}] $ for $ r \geq 2 \land L \mid k $.
		For our bound this yields
		\begin{align*}
			\frac{1}{n} \sum_{i=1}^{n} f_{\min}(z_i, \ldots, z_{i+d-1}) &= \frac{1}{d} - \frac{1}{n} \sum_{i=1}^{n} s(i) \\
			&\geq \frac{1}{d} - \frac{1}{n} \cdot \left\lceil \frac{n}{dL} \right\rceil \cdot (d-1) \cdot \frac{1}{d} \quad \overset{n \to \infty}{\longrightarrow} \quad \frac{1}{d} - \frac{d-1}{d^2 L}
		\end{align*}
		which implies
		\begin{equation*}
			\limsup\limits_{n \to \infty} \frac{1}{n} \sum_{i=1}^{n} f_{\min}(z_i, \ldots, z_{i+d-1}) \geq \frac{1}{d} - \frac{d-1}{d^2 L}.
		\end{equation*}
		Letting $ L $ increase, using the upper bound from Theorem~\ref{thm:cop-fmin-fmax}, 
		we finally get
		\begin{equation*}
			\lim\limits_{L \to \infty} \limsup\limits_{n \to \infty} \frac{1}{n} \sum_{i=1}^{n} f_{\min}(z_i, \ldots, z_{i+d-1}) = \frac{1}{d}.
		\end{equation*}
	\end{proof}
	
	Summing up, we have established the following main result (notice that for $ d = 2 $ we obtain the result from \cite{Pi-St} for all three functions):
	\begin{theorem}
		For every $ d \geq 2 $ the following identities hold and are best-possible:
		\begin{equation*}
			B_{f_{\min}}^d = \frac{1}{d}, \quad
			B_{f_{\max}}^d = \frac{d-1}{d}, \quad
			B_{f_{\sum}}^d = \frac{d^2-1}{3}
		\end{equation*}
	\end{theorem}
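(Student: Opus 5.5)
The plan is to assemble the final theorem from results already established, arguing separately for the upper bound $B_f^d \leq C_f^d$ and for sharpness.

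\textbf{Upper bound.} Fix $f \in \set{f_{\min}, f_{\max}, f_{\sum}}$ and a u.f.d.s.\ $(x_n)_{n \in \NN}$. For $j = 1, \ldots, d$ put $\xbf^j := (x_{n+j-1})_{n \in \NN}$. Each shifted sequence is again uniformly distributed, because discarding finitely many terms does not change the limit of the empirical frequencies. The block averages defining $B_f^d$ are exactly the averages $\frac{1}{n}\sum_{i=1}^n f(x_i^1,\ldots,x_i^d)$, so $B_f^d \leq C_f^d$. By the second lemma of Section~\ref{sec:trans-cop}, $C_f^d$ equals the supremum of $\int f \, \dd\vartheta$ over $\mathcal{P}_\textrm{ds}([0,1]^d)$. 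Theorems~\ref{thm:cop-fmin-fmax} and~\ref{thm:cop-fsum} evaluate this supremum as $\frac{1}{d}$, $\frac{d-1}{d}$ and $\frac{d^2-1}{3}$, respectively. This yields the three upper bounds.

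\textbf{Lower bound.} For $f_{\max}$ and $f_{\sum}$, Proposition~\ref{prop:fmax-fsum} gives a single u.f.d.s.\ whose block averages converge to $C_f^d$. Hence $B_f^d \geq C_f^d$, and the supremum is even attained.

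For $f_{\min}$ with $d \geq 3$, Proposition~\ref{prop:fmin} gives u.f.d.s.\ $(z_n^L)_{n \in \NN}$ whose $\limsup$ values tend to $\frac{1}{d}$ as $L \to \infty$. Since $B_{f_{\min}}^d$ is a supremum over all of $\mathcal{U}$, it is at least each of these values, and therefore $B_{f_{\min}}^d \geq \frac{1}{d}$.

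For $d = 2$ we have $f_{\min} = f_{\max} = \frac{1}{2} f_{\sum}$, so the $f_{\min}$ case reduces to the $f_{\max}$ case already covered. Concretely, $B_{f_{\min}}^2 = B_{f_{\max}}^2 = \frac{1}{2}$, in agreement with \cite{Pi-St}.

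\textbf{Best-possible.} ``Best-possible'' is understood as follows: no smaller constant serves as an upper bound, since the bounds are attained for $f_{\max}$ and $f_{\sum}$ and approximated arbitrarily well for $f_{\min}$.

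All of the real work lies in the earlier results. The only point needing care here is the shift argument: the shifted sequences must be genuine u.f.d.s., so that the block sum for $B_f^d$ really is one of the competitors in $C_f^d$. The theorem then follows by combining the inequalities.
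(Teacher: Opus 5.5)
Your proposal is correct and assembles the theorem the same way the paper does. The shift argument gives $B_f^d \leq C_f^d$. The second lemma of Section~\ref{sec:trans-cop}, together with Theorems~\ref{thm:cop-fmin-fmax} and~\ref{thm:cop-fsum}, evaluates $C_f^d$. Proposition~\ref{prop:fmax-fsum} gives attainment for $f_{\max}$ and $f_{\sum}$, Proposition~\ref{prop:fmin} gives approximation for $f_{\min}$ when $d \geq 3$, and the identity $f_{\min} = f_{\max} = \frac{1}{2} f_{\sum}$ handles $d = 2$. Your reading of ``best-possible'' (attained for $f_{\max}$ and $f_{\sum}$, approached arbitrarily closely for $f_{\min}$) also matches the paper's.
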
	
	
	\begin{remark}
		As mentioned in the introduction, our main objective was to determine $ B_f^d $, given by
		\begin{equation*}
			B_f^d = \sup_{(x_n)_{n \in \NN} \in \mathcal{U}} \limsup\limits_{n \to \infty} \frac{1}{n} \sum_{i=1}^{n} f(x_i,x_{i+1},\ldots,x_{i+d-1})
		\end{equation*}
		for each of the three functions $ f_{\min}, f_{\max}, f_{\sum} $.
		Instead of tackling the problem directly, we first added degrees of freedom, considered
		\begin{equation*}
			C_f^d = \sup_{\xbf^1,\xbf^2,\ldots,\xbf^d \in \mathcal{U}} \limsup\limits_{n \to \infty} \frac{1}{n} \sum_{i=1}^{n} f(x_i^1,x_i^2,\ldots,x_i^d),
		\end{equation*}
		translated $ C_f^d $ to a maximization problem over $ \mathcal{P}_\textrm{ds}([0,1]^d) $, and solved the latter.
		The fact that the (sharp) upper bounds $ C_f^d $ obtained for the more general problem coincide with our bounds of interest $ B_f^d $ is to a certain extent surprising.
	\end{remark}

	\section{Outlook}
	
	In this paper we considered the functions $ f_{\min}, f_{\max}, f_{\sum} \colon [0,1]^d \to [0,\infty) $ considering all pairwise distances of its arguments.
	It seems equally interesting to study the related functions
	\begin{align*}
		g_{\min}(\xbf) &\coloneqq \min_{i = 1,\ldots,d-1} \abs{x_i-x_{i+1}}, \\
		g_{\max}(\xbf) &\coloneqq \max_{i = 1,\ldots,d-1} \abs{x_i-x_{i+1}}, \\
		g_{\sum}(\xbf) &\coloneqq \sum_{i = 1,\ldots,d-1} \abs{x_i-x_{i+1}},
	\end{align*}
	which only take into account/aggregate distances of consecutive arguments.
	Considering that changing to the just mentioned functions goes hand in hand with losing symmetry, we conjecture that deriving analogous results might be significantly more difficult.
	In fact, in this setting it is not enough any more to have a given pool of arguments but beyond this a non-cyclic (!) ordering is needed.
	In our language of black blocks, which started with highest and lowest one, and then was extended step by step by the number of layers, it would require some kind of timestamp.
	We leave it as an open problem.

\end{document}